\theoremstyle{plain}
\newtheorem{theorem}{Theorem}
\newtheorem{lemma}[theorem]{Lemma}
\theoremstyle{remark}
\newtheorem*{remark}{Remark} 
\def\P{{\rm P}} 
\def\E{{\rm E}} 
\title{Parrondo games with spatial dependence \break and a related spin system, II}
\author{S. N. Ethier\thanks{Partially supported by a grant from the Simons Foundation (209632).  Also supported by a Korean Federation of Science and Technology Societies grant funded by the Korean Government (MEST, Basic Research Promotion Fund).}\\
\begin{small}University of Utah\end{small}\\
\begin{small}Department of Mathematics\end{small}\\
\begin{small}155 South 1400 East, JWB 233\end{small}\\
\begin{small}Salt Lake City, UT 84112, USA\end{small}\\
\begin{small}e-mail: ethier@math.utah.edu\end{small}
\and
Jiyeon Lee\thanks{Supported by the Basic Science Research Program through the National Research Foundation of Korea (NRF) funded by the Ministry of Education, Science and Technology (2012-0004434).}\\
\begin{small}Yeungnam University\end{small}\\
\begin{small}Department of Statistics\end{small}\\
\begin{small}214-1 Daedong, Kyeongsan\end{small}\\
\begin{small}Kyeongbuk 712-749, South Korea\end{small}\\
\begin{small}e-mail: leejy@yu.ac.kr\end{small}}
\date{}
\begin{document}
\maketitle

\begin{abstract}
\noindent Let game $B$ be Toral's cooperative Parrondo game with (one-dimensional) spatial dependence, parameterized by $N\ge3$ and $p_0,p_1,p_2,p_3\in[0,1]$, and let game $A$ be the special case $p_0=p_1=p_2=p_3=1/2$.  Let $\mu_B^N$ (resp., $\mu_{(1/2,1/2)}^N$) denote the mean profit per turn to the ensemble of $N$ players always playing game $B$ (resp., always playing the randomly mixed game $(1/2)(A+B)$).  In previous work we showed that, under certain conditions, both sequences converge and the limits can be expressed in terms of a parameterized spin system on the one-dimensional integer lattice.  Of course one can get similar results for $\mu_{(\gamma,1-\gamma)}^N$ corresponding to $\gamma A+(1-\gamma)B$ for $0<\gamma<1$.  In this paper we replace the random mixture with the nonrandom periodic pattern $A^r B^s$, where $r$ and $s$ are positive integers.  We show that, under certain conditions, $\mu_{[r,s]}^N$, the mean profit per turn to the ensemble of $N$ players repeatedly playing the pattern $A^rB^s$, converges to the same limit that $\mu_{(\gamma,1-\gamma)}^N$ converges to, where $\gamma:=r/(r+s)$.  For a particular choice of the probability parameters, namely $p_0=1$, $p_1=p_2\in(1/2,1)$, and $p_3=0$, we show that the Parrondo effect (i.e., $\mu_B^N\le0$ and $\mu_{[r,s]}^N>0$) is present if and only if $N$ is even, at least when $s=1$.
\medskip

\noindent \textit{AMS 2000 subject classification}: Primary 60K35; secondary 60J20. \smallskip\par
\noindent \textit{Key words and phrases}: Parrondo's paradox, cooperative Parrondo games, discrete-time Markov chain, stationary distribution, strong law of large numbers, interacting particle system, spin system, ergodicity. 
\end{abstract}

\section{Introduction}\label{intro}

In Toral's \cite{T01} \textit{cooperative Parrondo games}, there are $N\ge3$ players labeled from 1 to $N$ and arranged in a circle in clockwise order.  At each turn, one player is chosen at random to play.  Call him player $i$.  He plays either game $A$ or game $B$.  In game $A$ he tosses a fair coin.  In game $B$ he tosses a $p_0$-coin (i.e., $p_0$ is the probability of heads) if his neighbors $i-1$ and $i+1$ are both losers, a $p_1$-coin if $i-1$ is a loser and $i+1$ is a winner, a $p_2$-coin if $i-1$ is a winner and $i+1$ is a loser, and a $p_3$-coin if $i-1$ and $i+1$ are both winners.  (Because of the circular arrangement, player 0 is player $N$ and player $N+1$ is player 1.) A player's status as winner or loser depends on the result of his most recent game.  The player of either game wins one unit with heads and loses one unit with tails.  Under these assumptions, the model has an integer parameter $N\ge3$ and four probability parameters $p_0,p_1,p_2,p_3\in[0,1]$.  Game $A$ is fair, so the games are said to exhibit the \textit{Parrondo effect} if game $B$ is losing or fair and the random mixture $C:=\gamma A+(1-\gamma)B$ (i.e., toss a $\gamma$-coin, playing game $A$ if heads, game $B$ if tails) or the nonrandom periodic pattern $C:=A^r B^s$ is winning.  Toral used simulation to find a case (namely, $N=50$, $100$, or $200$, $p_0=1$, $p_1=p_2=4/25$, and $p_3=7/10$) in which the Parrondo effect appears when $\gamma=1/2$ or $r=s=2$, thereby providing a new example of \textit{Parrondo's paradox} (Harmer and Abbott \cite{HA02}, Abbott \cite{A10}).  

Ethier and Lee \cite{EL12c} studied the random mixture case with $\gamma=1/2$. In this paper we focus on the nonrandom pattern case.  Denoting the mean profits per turn to the ensemble of $N$ players by $\mu_{(\gamma,1-\gamma)}^N$ and $\mu_{[r,s]}^N$ in the two cases of game $C$ and by $\mu_B^N$ in the case of game $B$, it was shown in \cite{EL12c} that $\mu_B^N$ converges under certain conditions on the parameters, and that $\mu_{(1/2,1/2)}^N$ converges essentially always.  The limits can be described in terms of a parameterized spin system on the one-dimensional integer lattice.  Of course one can get similar results for $\mu_{(\gamma,1-\gamma)}^N$ for $0<\gamma<1$.  Here we show that $\mu_{[r,s]}^N$ converges, under certain conditions, to the same limit that $\mu_{(\gamma,1-\gamma)}^N$ converges to, where $\gamma:=r/(r+s)$.  A similar phenomenon is present in a nonspatial $N$-player model of Toral \cite{T02}, as shown in \cite{EL12a}, although in that setting, $\mu_{(\gamma,1-\gamma)}^N$ does not depend on $N$.

Numerical studies \cite{EL12b,EL12d} suggest that $\mu_{[r,s]}^N$ converges much more slowly than $\mu_{(\gamma,1-\gamma)}^N$.  For example, let us consider the special case $p_0=1/10$, $p_1=p_2=3/5$, and $p_3=3/4$.  By $N=18$ (the largest $N$ for which computations have been done in the nonrandom-pattern case), $\mu_{[1,1]}^N$ matches its limiting value to only two significant digits.  On the other hand, by $N=19$ (the largest $N$ for which computations have been done in the random-mixture case), $\mu_B^N$ has stabilized to four significant digits and $\mu_{(1/2,1/2)}^N$ has stabilized to 11 significant digits.

As in \cite{EL12c}, we consider separately a particular choice of the probability parameters, namely $p_0=1$, $p_1=p_2\in(1/2,1)$, and $p_3=0$.  We show that the Parrondo effect (i.e., $\mu_B^N\le0$ and $\mu_{[r,s]}^N>0$) is present if and only if $N$ is even, at least when $s=1$.

Section~\ref{Markov} describes the $N$-player model and the associated discrete-time Markov chain.  Section~\ref{SLLN} establishes a strong law of large numbers (SLLN) for the sequence of profits to the ensemble of $N$ players playing the nonrandom pattern $A^rB^s$, giving several formulas for $\mu_{[r,s]}^N$.  Section~\ref{p0=1,p3=0} treats the special case in which we can confirm the Parrondo effect for all even $N\ge4$.  Section~\ref{spin} introduces the related spin system and reviews its basic properties.  Finally, Section~\ref{limit} establishes our main result, the convergence of $\mu_{[r,s]}^N$ as $N\to\infty$ to a limit that can be expressed in terms of the spin system.

\section{The discrete-time Markov chain}\label{Markov}

Let us define the Markov chain, introduced by Mihailovi\'c and Rajkovi\'c \cite{MR03}, that keeps track of the status (loser or winner, 0 or 1) of each of the $N$ players playing game $B$.  It depends on an integer parameter $N\ge3$ and four probability parameters $p_0,p_1,p_2,p_3\in[0,1]$.  Its state space is the product space 
$$
\Sigma:=\{\bm x=(x_1,x_2,\ldots,x_N): x_i\in\{0,1\}{\rm\ for\ }i=1,\ldots,N\}=\{0,1\}^N
$$
with $2^N$ states.  Let $m_i(\bm x):=2x_{i-1}+x_{i+1}$, or, in other words, $m_i(\bm x)$ is the integer (0, 1, 2, or 3) whose binary representation is $(x_{i-1}\,x_{i+1})_2$;  of course, $x_0:=x_N$ and $x_{N+1}:=x_1$.  Also, let $\bm x^i$ be the element of $\Sigma$ equal to $\bm x$ except at the $i$th component; for example, $\bm x^1:=(1-x_1,x_2,x_3,\ldots,x_N)$.  The one-step transition matrix $\bm P_B$ for this Markov chain has the form
\begin{equation*}
P_B(\bm x,\bm x^i):=\begin{cases}N^{-1}p_{m_i(\bm x)}&\text{if $x_i=0$,}\\N^{-1}q_{m_i(\bm x)}&\text{if $x_i=1$,}\end{cases}\qquad i=1,\ldots,N,\;\bm x\in\Sigma,
\end{equation*}
\begin{equation*}
P_B(\bm x,\bm x):=N^{-1}\bigg(\sum_{i:x_i=0}q_{m_i(\bm x)}+\sum_{i:x_i=1}p_{m_i(\bm x)}\bigg),\qquad \bm x\in\Sigma,
\end{equation*}
where $q_m:=1-p_m$ for $m=0,1,2,3$ and empty sums are 0, and $P_B(\bm x,\bm y)=0$ otherwise. 

Necessary and sufficient conditions on $N\ge3$ and $p_0,p_1,p_2,p_3\in[0,1]$ for the ergodicity of the Markov chain were given in \cite{EL12c}.  (A Markov chain is \textit{ergodic} if there is a unique stationary distribution and the distribution at time $n$ converges to it as $n\to\infty$, regardless of the initial distribution.)

If $p_0=p_1=p_2=p_3=1/2$, then we denote $\bm P_B$ by $\bm P_A$.  Our main concern is with the nonrandom pattern of games $A^rB^s$ for positive integers $r$ and $s$, in which case the relevant Markov chain in $\Sigma$ has one-step transition matrix $\bm P_A^r\bm P_B^s$.  Our first result shows that this chain is ergodic for all choices of the parameters.  Actually, we need a slightly stronger result.

\begin{lemma}\label{ergodic}
Let $N\ge3$ and $p_0,p_1,p_2,p_3\in[0,1]$. Fix $r,s\ge1$ and put $\bm P_1:={\bm P}_A^{r-1}{\bm P}_B^s\bm P_A$, \dots, $\bm P_r:={\bm P}_B^s{\bm P}_A^r$, $\bm P_{r+1}:={\bm P}_B^{s-1}{\bm P}_A^r\bm P_B$, \dots, and $\bm P_{r+s}:={\bm P}_A^r{\bm P}_B^s$.  (These are the $r+s$ cyclic permutations of ${\bm P}_A^r{\bm P}_B^s$.)

\emph{($i$)} The Markov chain in $\Sigma$ with one-step transition matrix $\bm P_1$, $\bm P_2$, \dots, or $\bm P_r$ is irreducible and aperiodic.  In particular, it is ergodic. 

\emph{($ii$)} The Markov chain in $\Sigma$ with one-step transition matrix $\bm P_{r+1}$, $\bm P_{r+2}$, \dots, or $\bm P_{r+s}$ has the following behavior.  There exists a (possibly empty) proper subset $T\subset\Sigma$ such that $T$ is transient and $\Sigma-T$ is closed, irreducible, and aperiodic.  In particular, the Markov chain is ergodic.  In fact, the set $T$, which does not depend on $r$ or $s$, can be specified as follows.

\emph{(a)} If $p_0,p_3\in(0,1)$, then $T=\varnothing$.

\emph{(b)} If $p_0=1$ and $p_3\in(0,1]$, then $T=\{\bm0\}$, with one exception.  If $N$ is divisible by $3$ and $(p_0,p_1,p_2,p_3)=(1,0,0,1)$, then $T=\{\bm0,011\cdots011,101\cdots101,\linebreak110\cdots110\}$.

\emph{(c)} If $p_0=0$ and $p_3\in(0,1)$, then $T=\varnothing$, with one exception.  If $N$ is divisible by $3$ and $p_1=p_2=1$, then $T=\{001\cdots001,010\cdots010,100\cdots100\}$.

\emph{(d)} If $p_0\in[0,1)$ and $p_3=0$, then $T=\{\bm1\}$, with one exception.  If $N$ is divisible by $3$ and $(p_0,p_1,p_2,p_3)=(0,1,1,0)$, then $T=\{001\cdots001,010\cdots010,\linebreak100\cdots100,\bm1\}$.

\emph{(e)} If $p_0\in(0,1)$ and $p_3=1$, then $T=\varnothing$, with one exception.  If $N$ is divisible by $3$ and $p_1=p_2=0$, then $T=\{011\cdots011,101\cdots101,110\cdots110\}$.

\emph{(f)} If $p_0=1$ and $p_3=0$, then $T=\{\bm0,\bm1\}$.

\emph{(g)} Let $p_0=0$ and $p_3=1$.  If $N$ is odd, then $T=\varnothing$, and if $N$ is even, then $T=\{01\cdots01,10\cdots10\}$, with two exceptions.  If $p_1=p_2=0$, then $T$ comprises all states in which $0$s occur as singletons and $1$s occur as singletons or pairs.  If $p_1=p_2=1$, then $T$ comprises all states in which $1$s occur as singletons and $0$s occur as singletons or pairs.
\end{lemma}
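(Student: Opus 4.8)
The plan rests on two elementary facts about the two building blocks $\bm P_A$ and $\bm P_B$. Since $p_0=p_1=p_2=p_3=1/2$ in game $A$, we have $P_A(\bm x,\bm x)=1/2$ and $P_A(\bm x,\bm x^i)=1/(2N)$ for every $\bm x$ and $i$; consequently $\bm P_A^{j}(\bm x,\bm y)>0$ precisely when the Hamming distance $d(\bm x,\bm y)$ is at most $j$ (flip the differing coordinates one at a time and pad with self-loops), so $\bm P_A$ is irreducible and aperiodic and $\bm P_A^{N}$ is entrywise strictly positive. For $\bm P_B$ I would record when $P_B(\bm y,\bm y)=0$: the formula for $\bm P_B$ shows this holds iff $p_{m_i(\bm y)}=1$ for all $i$ with $y_i=0$ and $p_{m_i(\bm y)}=0$ for all $i$ with $y_i=1$, and, since $m_i(\bm y^i)=m_i(\bm y)$, one gets $P_B(\bm y^i,\bm y)=N^{-1}q_{m_i(\bm y)}$ or $N^{-1}p_{m_i(\bm y)}$ according as $y_i=0$ or $1$; hence in that case the entire $\bm y$-column of $\bm P_B$, and therefore of $\bm P_B^{s}$ for every $s$, vanishes. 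Let $T$ be this set of ``frozen'' states. It depends only on $N$ and $p_0,\dots,p_3$, not on $r$ or $s$, and it is a proper subset of $\Sigma$ because $\Sigma\setminus T$ turns out to be nonempty in every parameter regime.

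Next I would dispatch the easy half of part (ii): each of $\bm P_{r+1},\dots,\bm P_{r+s}$ has the form $(\,\cdot\,)\,\bm P_B^{j}$ with $j\ge1$, so by the above it has a zero column at every $\bm y\in T$; hence no transition enters $T$, which makes $T$ transient and $\Sigma\setminus T$ closed. What then remains for both (i) and (ii) is irreducibility --- of $\Sigma$ under $\bm P_1,\dots,\bm P_r$ and of $\Sigma\setminus T$ under $\bm P_{r+1},\dots,\bm P_{r+s}$; aperiodicity follows at once, since each of these products has a self-loop at every non-frozen state (stall through every game-$B$ step, self-loop through every game-$A$ step), and a finite irreducible chain with a self-loop is aperiodic. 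For irreducibility I would use a routing argument: from a non-frozen state one can stall through all game-$B$ steps and use the $r$ game-$A$ steps to change up to $r$ chosen coordinates, while at frozen states --- and, more generally, wherever stalling is unavailable --- one instead exploits game $B$'s own (forced) transitions; concatenating and iterating such steps connects any two states of the relevant space. In part (i) the trailing block $\bm P_A^{k}$ with $k\ge1$ ensures that even the frozen states get reached, while in part (ii) one checks the routing never has to leave $\Sigma\setminus T$.

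The explicit description (a)--(g) of $T$ is then a finite case analysis. For a frozen $\bm y\ne\bm0,\bm1$ I would read off $m_i(\bm y)$ at the two ends of a maximal run of $0$s and of a maximal run of $1$s; the requirements ``$p_{m_i}=1$ at each $0$-coordinate and $p_{m_i}=0$ at each $1$-coordinate'' then turn into equalities among $p_0,p_1,p_2,p_3$ that propagate around the circle. In the generic regimes these are unsatisfiable, so $T\subseteq\{\bm0,\bm1\}$ with $\bm0\in T\Leftrightarrow p_0=1$ and $\bm1\in T\Leftrightarrow p_3=0$; in the remaining regimes they instead force $\bm y$ to have rigid structure (period $2$ or $3$, or bounded run lengths), which is the source of the hypotheses ``$N$ even'' and ``$3\mid N$'' and of the exceptional sets. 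Splitting on whether each of $p_0$ and $p_3$ lies in $(0,1)$, equals $0$, or equals $1$, and recording the solution set in each box, produces exactly (a)--(g).

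The crux, and the main obstacle, is the irreducibility --- equivalently, the assertion that $T$ is \emph{precisely} the transient set, so that $\Sigma\setminus T$ is a single (aperiodic) communicating class. When $T$ is large, e.g.\ in the exception of (g) with $p_1=p_2=0$, where $T$ consists of all states whose $0$s are singletons and whose $1$s are singletons or pairs, and when $r$ and $s$ are small --- only one game-$A$ step per period, as in the case $s=1$ that the paper ultimately needs --- stalling is usually impossible and one must genuinely use game $B$'s moves to travel within $\Sigma\setminus T$; arranging this routing uniformly across all of the parameter regimes above, rather than redoing it regime by regime, is the delicate part.
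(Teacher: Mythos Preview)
Your plan is essentially the paper's: define $T$ as the set of ``frozen'' states where $P_B(\bm y,\bm y)=0$, observe (as you do) that the entire $\bm y$-column of $\bm P_B$ then vanishes, and build irreducibility and aperiodicity of the cyclic products from these two facts together with the self-loop $P_A(\bm x,\bm x)=1/2$.  The explicit lists (a)--(g) are precisely the solution set of the equation $P_B(\bm y,\bm y)=0$, carried out regime by regime.

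One correction to your closing paragraph, though: your worry that ``stalling is usually impossible'' on $\Sigma\setminus T$ when $T$ is large is misplaced.  By your own definition of $T$, one has $P_B(\bm x,\bm x)>0$ for \emph{every} $\bm x\in\Sigma\setminus T$, so game-$B$ stalling is always available there, regardless of how large $T$ is or how small $r$ and $s$ are.  The genuine crux --- which you do mention but understate --- is the \emph{routing}: one must show that any two states of $\Sigma\setminus T$ can be joined by a chain of single-bit flips that never leaves $\Sigma\setminus T$.  The paper establishes this by a case-by-case argument on the shape of $T$: pick a short segment (of length two or three) of the target $\bm y$ that certifies $\bm y\notin T$, first adjust $\bm x$ to agree with $\bm y$ on those coordinates in an order chosen to avoid $T$, and then the remaining flips are automatically safe.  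That combinatorial step, not the stalling, is where the regime-by-regime work actually lives.
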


\begin{proof}
We claim that it is enough for part ($ii$) to show that
\begin{eqnarray}\label{aper}
P_B(\bm x,\bm x)&>&0,\qquad \bm x\in\Sigma-T,\\ \label{closed}
P_B(\bm x,\bm y)&=&0,\qquad \bm x\in\Sigma,\;\bm y\in T.
\end{eqnarray}
We treat the case of $\bm P_{r+s}$, the cases of $\bm P_{r+1},\ldots,\bm P_{r+s-1}$ being similar.
To see this, suppose $\bm x,\bm y\in\Sigma-T$.  Then there exist $\bm x=\bm x_0,\bm x_1,\ldots,\bm x_{n-1},\bm x_n=\bm y$ such that $\bm x_i\in\Sigma$ and $P_A(\bm x_{i-1},\bm x_i)>0$ for $i=1,2,\ldots,n$ because $\bm P_A$ is irreducible.  We claim that $\bm x_0,\bm x_1,\ldots,\bm x_n$ can be assumed to belong to $\Sigma-T$ (see the paragraph following the next one).  But then
\begin{equation*}
P_{r+s}(\bm x_{i-1},\bm x_i)\ge P_A(\bm x_{i-1},\bm x_i)(1/2)^{r-1}[P_B(\bm x_i,\bm x_i)]^s>0
\end{equation*}
by (\ref{aper}) (and since $P_A(\bm x_i,\bm x_i)=1/2$), so $\Sigma-T$ is irreducible for $\bm P_{r+s}$.  Similarly, if $\bm x\in\Sigma-T$,
\begin{equation*}
P_{r+s}(\bm x,\bm x)\ge(1/2)^r[P_B(\bm x,\bm x)]^s>0
\end{equation*}
by (\ref{aper}), so $\Sigma-T$ is also aperiodic for $\bm P_{r+s}$.  By (\ref{closed}) and the fact that the final factor in $\bm P_{r+s}$ is $\bm P_B$, $\Sigma-T$ is closed and states in $T$ are transient.

We claim that (\ref{aper}) and (\ref{closed}) are also sufficient for part ($i$).  We treat the case of $\bm P_r$, the cases of $\bm P_1,\ldots,\bm P_{r-1}$ being similar.  To see this, suppose $\bm x,\bm y\in\Sigma$.  By (\ref{closed}), there exists $\bm x'\in\Sigma-T$ such that $P_B(\bm x,\bm x')>0$.  Hence
\begin{equation*}
P_r(\bm x,\bm x')\ge P_B(\bm x,\bm x')[P_B(\bm x',\bm x')]^{s-1}(1/2)^r>0.
\end{equation*}
Also, because of the simple form that $T$ has, there exists $\bm y'\in\Sigma-T$ such that $P_A(\bm y',\bm y)>0$.  Finally, as in the preceding paragraph, there exist $\bm x'=\bm x_0,\bm x_1,\ldots,\bm x_{n-1}=\bm y',\bm x_n=\bm y$ such that $\bm x_i\in\Sigma-T$ and $P_A(\bm x_{i-1},\bm x_i)>0$ for $i=1,2,\ldots,n-1$.  We then get
\begin{equation*}
P_r(\bm x_{i-1},\bm x_i)\ge[P_B(\bm x_{i-1},\bm x_{i-1})]^s(1/2)^{r-1}P_A(\bm x_{i-1},\bm x_i)>0
\end{equation*}
for $i=1,2,\ldots,n$, so $\bm P_r$ is irreducible.  Finally, for the aperiodicity of $\bm P_r$, we observe that, if $\bm x\in\Sigma-T$,
\begin{equation*}
P_r(\bm x,\bm x)\ge[P_B(\bm x,\bm x)]^s(1/2)^r>0,
\end{equation*}
and this suffices by irreducibility.

There is a missing step in the first paragraph.  Specifically, we must show that, given $\bm x,\bm y\in\Sigma-T$, there exist $\bm x=\bm x_0,\bm x_1,\ldots,\bm x_{n-1},\bm x_n=\bm y$ such that $\bm x_i\in\Sigma-T$ and $P_A(\bm x_{i-1},\bm x_i)>0$ for $i=1,2,\ldots,n$.  In fact, we can choose $n$ equal to the Hamming distance between $\bm x$ and $\bm y$, $d(\bm x,\bm y):=\sum_{i=1}^N|x_i-y_i|$.  The justification for this requires a case-by-case analysis, but the idea is much the same in each case.  Suppose $T=\{\bm0\}$.  Then $\bm y$ must have $y_k=1$ for some $k\in\{1,2,\ldots,N\}$.  If $x_k=1$, then every $\bm x_i$ will have $k$th component 1, hence will not be in $T$.  If $x_k=0$, then let $\bm x_1=\bm x^k$.  Again, $\bm x_i$ will have $k$th component equal to 1 for $i=1,\ldots,n$, hence will not be in $T$ along with $\bm x_0$ by assumption.  

A similar argument works for $T=\{\bm1\}$, so suppose $T=\{\bm0,\bm1\}$.  Let $k\in\{1,2,\ldots,N\}$ be such that $(y_k,y_{k+1})=(0,1)$.  If $(x_k,x_{k+1})$ equals $(0,1)$, we are finished; if it equals $(0,0)$ or $(1,1)$, then $\bm x_1$ is $\bm x^k$ or $\bm x^{k+1}$ as needed.  Finally, if $(x_k,x_{k+1})=(1,0)$, then define $\bm x_1$ and $\bm x_2$ by flipping the bits at sites $k$ and $k+1$ in whichever order is necessary to avoid having $\bm x_1\in T$.  It follows that $\bm x_0,\bm x_1,\ldots,\bm x_n\in\Sigma-T$. 

Next, suppose $N$ is even and $T=\{01\cdots01,10\cdots10\}$.  Given $\bm x,\bm y\in\Sigma-T$, $\bm y$ must have two consecutive 0s or two consecutive 1s; assume the former case, the latter case being symmetric.  Then there exists $k\in\{1,2,\ldots,N\}$ such that $(y_k,y_{k+1})=(0,0)$.  The argument is now completed as in the preceding paragraph.

There are six other cases that must be considered (the various exceptional cases in the lemma).  By symmetry, it is enough to consider the case $T=\{001\cdots001,010\cdots010,100\cdots100\}$, the case where $T$ is the union of the latter set and $\bm1$, and the case where $T$ contains all vectors in which 1s occur only as singletons and 0s occur only as singletons or pairs.  We treat the first case, the other two being similar.  Given $\bm x,\bm y\in\Sigma-T$, $\bm y$ must have a segment of the form 000, 011, 101, 110, or 111.  For example, in the first case, there is a $k\in\{1,2,\ldots,N\}$ such that $(y_k,y_{k+1},y_{k+2})=(0,0,0)$.  In any case, if $\bm x$ differs from $\bm y$ at none of these three sites, we are finished.  If it differs at one, we flip the bit at that site to determine $\bm x_1$.  If it differs at two, we flip the bits at these two sites to determine $\bm x_1$ and $\bm x_2$, the order chosen so as to avoid having $\bm x_1\in T$.  If it differs at all three sites, we flip the bit at one of the three sites to determine $\bm x_1$, the site chosen to avoid $\bm x_1\in T$.  Then it differs at two of the sites, a case we have already treated.  This completes the missing step.

It remains to show that (\ref{aper}) and (\ref{closed}) are satisfied by $\bm P_B$.  We consider (\ref{aper}) first.  By virtue of
\begin{equation*}
P_B(\bm x,\bm x):=N^{-1}\bigg(\sum_{i:x_i=0}q_{m_i(\bm x)}+\sum_{i:x_i=1}p_{m_i(\bm x)}\bigg),
\end{equation*}
property (\ref{aper}) holds if, for each $\bm x\in\Sigma-T$, at least one of the following holds:  $p_0<1$ and $\bm x$ contains the segment $000$; $p_0>0$ and $\bm x$ contains the segment $010$; $p_1<1$ and $\bm x$ contains the segment $001$; $p_1>0$ and $\bm x$ contains the segment $011$; $p_2<1$ and $\bm x$ contains the segment $100$; $p_2>0$ and $\bm x$ contains the segment $110$; $p_3<1$ and $\bm x$ contains the segment $101$; $p_3>0$ and $\bm x$ contains the segment $111$.  

(a)  If $\bm x$ has three consecutive $0$s or three consecutive $1$s, then $p_0<1$ or $p_3>0$ suffice, so suppose not.  Then $\bm x$ contains the pair $01$.  If $01$ is part of $010$ or $101$, then $p_0>0$ or $p_3<1$ suffice, so we can assume that $01$ is part of $0011$, hence $100110$.  This suffices if $p_2<1$, $p_1<1$, $p_1>0$, or $p_2>0$, and at least two of these inequalities must hold.  

(b)  If $p_3<1$, then the argument is similar to that of (a), except $\bm x=\bm0$ is excluded but we cannot rule out three consecutive $0$s.  So $01$ is part of $0011$, hence part of $100\cdots00110$.  The proof is otherwise unchanged.

If $p_3=1$, then $\bm x=\bm0$ is excluded but we cannot rule out three consecutive $0$s.  If $\bm x$ has three consecutive $1$s, then $p_3>0$ suffices, so suppose not.  Then $\bm x$ contains the pair $01$.  If $01$ is part of $010$, then $p_0>0$ suffices.  Hence we can assume it is part of $011$, therefore $0110$.  We are finished if $p_1>0$ or $p_2>0$, so suppose $p_1=p_2=0$.  If we exclude $\bm x$ of the form $011\cdots011$, $101\cdots101$, or $110\cdots110$, then we have ruled out all possibilities (singleton $1$s and three or more consecutive $1$s are excluded, and two or more consecutive $0$s can be excluded because $p_1<1$ and $p_2<1$).

(c)  If $\bm x$ has three consecutive $0$s or three consecutive $1$s, then $p_0<1$ or $p_3>0$ suffice, so suppose not.  Then $\bm x$ contains the pair $01$.  If $01$ is part of $101$, then $p_3<1$ suffices, so suppose not.  Then it is part of $001$, therefore $1001$.  We are finished if $p_2<1$ or $p_1<1$, so suppose $p_1=p_2=1$.  If we exclude $\bm x$ of the form $001\cdots001$, $010\cdots010$, or $100\cdots100$, we have ruled out all possibilities (singleton $0$s and three or more consecutive $0$s are excluded, and two or more consecutive $1$s can be excluded because $p_1>0$ and $p_2>0$).

(d) and (e)  These cases are symmetric with (b) and (c).

(f)  The argument is similar to that of (a), except $\bm x=\bm0$ and $\bm x=\bm1$ are excluded but we cannot rule out three consecutive $0$s or three consecutive $1$s.  Then $\bm x$ contains the pair $01$.  If $01$ is part of $010$ or $101$, then $p_0>0$ or $p_3<1$ suffice, so we can assume that $01$ is part of $0011$, hence $100\cdots0011\cdots10$.  This is similar to case (a).  

(g)  If $\bm x$ has three consecutive $0$s or three consecutive $1$s, then $p_0<1$ or $p_3>0$ suffice.  So suppose not.  Then $\bm x$ contains the pair $01$.  Suppose it is part of $001$ or $011$.  In the first case it is part of $1001$.  If this is part of $10011$, then it suffices that $p_2<1$, $p_1<1$, or $p_1>0$, at least one of which must hold.  If this is part of $11001$, then it suffices that $p_2>0$, $p_2<1$, or $p_1<1$, at least one of which must hold.  Therefore we can assume that $1001$ is part of $010010$.  This suffices if $p_2<1$ or $p_1<1$, but if $p_1=p_2=1$, then we must rule out states in which $1$s occur as singletons and $0$s occur as singletons or pairs.  If $\bm x$ is not of this form, then we can choose our initial $01$ in such a way that it is not embedded in $010010$.  In the second case, in which $01$ is part of $011$, it is part of $0110$.  If this is part of $01100$, then it suffices that $p_1>0$, $p_2>0$, or $p_2<1$, at least one of which must hold.  If this is part of $00110$, then it suffices that $p_1<1$, $p_1>0$, or $p_2>0$, at least one of which must hold.  Therefore we can assume that $0110$ is part of $101101$.  This suffices if $p_1>0$ or $p_2>0$, but if $p_1=p_2=0$, then we must rule out states in which $0$s occur as singletons and $1$s occur as singletons or pairs.  If $\bm x$ is not of this form, then we can choose our initial $01$ in such a way that it is not embedded in $101101$.  Finally, the only other possibility is that $01$ is part of $1010$.  Assuming $\bm x$ is not part of $01\cdots01$ or $10\cdots10$ with $N$ even, there must be a $01$ that is not embedded in $1010$.

This finally proves (\ref{aper}), so we turn to (\ref{closed}), which is equivalent to
\begin{equation*}
P_B(\bm y,\bm y)=0,\quad P_B(\bm y^i,\bm y)=0,\qquad \bm y\in T,\; i=1,2,\ldots,N.
\end{equation*}
If $\bm0\in T$, then $p_0=1$ and it suffices to note that $P_B(\bm0,\bm0)=0$ and $P_B(\bm0^i,\bm0)=0$.  If $\bm1\in T$, then $p_3=0$ and it suffices to note that $P_B(\bm1,\bm1)=0$ and $P_B(\bm1^i,\bm1)=0$.  If $N$ is even and $01\cdots01\in T$, then $p_0=0$ and $p_3=1$ and it suffice to note that $P_B(01\cdots01,01\cdots01)=0$ and $P_B(01\cdots01^i,01\cdots01)=0$.  The same applies if $N$ is even and $10\cdots10\in T$.  If $N$ is divisible by 3 and  $001\cdots001\in T$, then $p_0=0$ and $p_1=p_2=1$ and it suffices to note that $P_B(001\cdots001,001\cdots001)=0$ and $P_B(001\cdots001^i,001\cdots001)=0$.  This also applies to rotations of $001\cdots001$.  If $N$ is divisible by 3 and  $011\cdots011\in T$, then $p_1=p_2=0$ and $p_3=1$ and it suffices to note that $P_B(011\cdots011,011\cdots011)=0$ and $P_B(011\cdots011^i,011\cdots011)=0$.  This also applies to rotations of $011\cdots011$.  The only remaining cases are the exceptional cases of part (g).  If $(p_0,p_1,p_2,p_3)=(0,0,0,1)$ and if $\bm x$ has only singleton 0s and singleton or paired 1s, then $P_B(\bm x,\bm x)=0$ and $P_B(\bm x^i,\bm x)=0$.  If $(p_0,p_1,p_2,p_3)=(0,1,1,1)$ and if $\bm x$ has only singleton or paired 0s and singleton 1s, then $P_B(\bm x,\bm x)=0$ and $P_B(\bm x^i,\bm x)=0$.
\end{proof}

\begin{lemma}\label{invariance}
Let $G$ be a subgroup of the symmetric group $S_N$.  Let $\bm P$ be the one-step transition matrix for a Markov chain in $\Sigma$ having a unique stationary distribution $\bm\pi$.  For $\bm x=(x_1,\ldots,x_N)\in\Sigma$ and $\sigma\in G$, write $\bm x_\sigma:=(x_{\sigma(1)},\ldots,x_{\sigma(N)})$, and assume that
\begin{equation}\label{G-invariance}
P(\bm x_\sigma,\bm y_\sigma)=P(\bm x,\bm y),\qquad \sigma\in G,\; \bm x,\bm y\in\Sigma. 
\end{equation}
Then $\pi(\bm x_\sigma)=\pi(\bm x)$ for all $\sigma\in G$ and $\bm x\in\Sigma$.  
\end{lemma}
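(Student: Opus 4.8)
The plan is to use uniqueness of the stationary distribution: for each fixed $\sigma\in G$ I would build from $\bm\pi$ a second stationary distribution by permuting coordinates, and then invoke uniqueness to conclude it equals $\bm\pi$.

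First I would record the elementary bookkeeping identity for the coordinate action, namely $(\bm z_\tau)_\rho=\bm z_{\tau\rho}$ for $\tau,\rho\in S_N$ and $\bm z\in\Sigma$ (since the $i$th component of either side is $z_{\tau(\rho(i))}$), and in particular $(\bm u_{\sigma^{-1}})_\sigma=\bm u$. Next, fixing $\sigma\in G$, I would define the row vector $\bm\nu$ on $\Sigma$ by $\nu(\bm x):=\pi(\bm x_\sigma)$. Because $\bm x\mapsto\bm x_\sigma$ is a bijection of $\Sigma$ onto itself, $\bm\nu$ is again a probability distribution on $\Sigma$.

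The heart of the argument is to check that $\bm\nu$ is stationary for $\bm P$. I would compute
\begin{equation*}
(\bm\nu\bm P)(\bm y)=\sum_{\bm x\in\Sigma}\pi(\bm x_\sigma)\,P(\bm x,\bm y)=\sum_{\bm u\in\Sigma}\pi(\bm u)\,P(\bm u_{\sigma^{-1}},\bm y),
\end{equation*}
where the second equality is the change of variables $\bm u:=\bm x_\sigma$, i.e.\ $\bm x=\bm u_{\sigma^{-1}}$, which is legitimate because $\sigma^{-1}\in G\subset S_N$. Now apply hypothesis \eqref{G-invariance} to the pair $(\bm u_{\sigma^{-1}},\bm y)$ with the group element $\sigma$: using $(\bm u_{\sigma^{-1}})_\sigma=\bm u$ this gives $P(\bm u_{\sigma^{-1}},\bm y)=P(\bm u,\bm y_\sigma)$. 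Hence
\begin{equation*}
(\bm\nu\bm P)(\bm y)=\sum_{\bm u\in\Sigma}\pi(\bm u)\,P(\bm u,\bm y_\sigma)=(\bm\pi\bm P)(\bm y_\sigma)=\pi(\bm y_\sigma)=\nu(\bm y),
\end{equation*}
so $\bm\nu\bm P=\bm\nu$. By the assumed uniqueness of the stationary distribution, $\bm\nu=\bm\pi$, i.e.\ $\pi(\bm x_\sigma)=\pi(\bm x)$ for every $\bm x\in\Sigma$; since $\sigma\in G$ was arbitrary, this is the assertion.

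I expect no analytic obstacle here — all sums are finite — and the only point demanding care is the permutation bookkeeping: verifying $(\bm z_\tau)_\rho=\bm z_{\tau\rho}$ and keeping track of where $\sigma$ versus $\sigma^{-1}$ must appear (which is precisely where closure of $G$ under inverses is used). That is the one place a sign/index slip could sneak in, so I would state the composition identity explicitly before running the change of variables.
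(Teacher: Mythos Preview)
Your argument is correct and is the standard one: define the permuted measure, verify it is stationary via the $G$-invariance hypothesis and a change of variables, then invoke uniqueness. The paper itself does not reprove this lemma but simply cites \cite{EL12b}, so there is no alternative approach to compare against; your proof is exactly what one would expect to find there.
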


This lemma is from \cite{EL12b}, where it was shown to apply to $\bm P_B$ when $G$ is the subgroup of cyclic permutations (or rotations) of $(1,2,\ldots,N)$ and, if $p_1=p_2$, when $G$ is the subgroup generated by the cyclic permutations and the order-reversing permutation (rotations and/or reflections) of $(1,2,\ldots,N)$, the dihedral group of order $2N$.  It therefore also applies to $\bm P_A$ under the same conditions and hence, for fixed $r,s\ge1$, to each of the one-step transition matrices $\bm P_1,\bm P_2,\ldots,\bm P_{r+s}$ of Lemma \ref{ergodic} under the same conditions.  For this we need a simple observation.  Let us define the stochastic matrix $\bm P$ with rows and columns indexed by $\Sigma$ to be \textit{$G$-invariant} if (\ref{G-invariance}) holds.  We notice that the class of $G$-invariant stochastic matrices is closed under matrix multiplication, for if $\bm P_1$ and $\bm P_2$ are $G$-invariant, then
\begin{eqnarray*}
[\bm P_1\bm P_2](\bm x_\sigma,\bm y_\sigma)&=&\sum_{\bm z\in\Sigma}P_1(\bm x_\sigma,\bm z)P_2(\bm z,\bm y_\sigma)=\sum_{\bm z\in\Sigma}P_1(\bm x_\sigma,\bm z_\sigma)P_2(\bm z_\sigma,\bm y_\sigma)\\
&=&\sum_{\bm z\in \Sigma}P_1(\bm x,\bm z)P_2(\bm z,\bm y)=[\bm P_1 \bm P_2](\bm x,\bm y).
\end{eqnarray*}
for all $\sigma\in G$ and $\bm x,\bm y\in\Sigma$.

For example, with $\bm\pi$ being the unique stationary distribution of $\bm P_A^r\bm P_B^s$, the lemma applies to $\bm\pi\bm P_A^r\bm P_B^v$, which is the unique stationary distribution of $\bm P_{r+v}$, for $v=0,1,\ldots,s-1$.  We conclude that, if $p_1=p_2$, then the $1,3$ two-dimensional marginals of $\bm\pi\bm P_A^r\bm P_B^v$ satisfy
\begin{equation}\label{1,3 symm}
[\bm\pi\bm P_A^r\bm P_B^v]_{1,3}(0,1)=[\bm\pi\bm P_A^r\bm P_B^v]_{1,3}(1,0),\qquad v=0,1,\ldots,s-1.
\end{equation}

\section{SLLN}\label{SLLN}

We will need the following version of the strong law of large numbers from \cite{EL09}.

\begin{theorem}\label{SLLN-EL09}
Let $\bm P_A$ and $\bm P_B$ be one-step transition matrices for Markov chains in a finite state space $\Sigma_0$.  Fix $r,s\ge1$.  Assume that $\bm P:=\bm P_A^r\bm P_B^s$, as well as all cyclic permutations of $\bm P_A^r\bm P_B^s$, are ergodic, and let the row vector $\bm\pi$ be the unique stationary distribution of $\bm P$.  Given a real-valued function $w$ on $\Sigma_0\times\Sigma_0$, define the payoff matrix $\bm W:=(w(i,j))_{i,j\in\Sigma_0}$.  Define $\dot{\bm P}_A:=\bm P_A\circ\bm W$ and $\dot{\bm P}_B:=\bm P_B\circ\bm W$, where $\circ$ denotes the Hadamard (entrywise) product, and put
$$
\mu_{[r,s]}:={1\over r+s}\bigg[\sum_{u=0}^{r-1}\bm\pi\bm P_A^u\dot{\bm P}_A\bm1+\sum_{v=0}^{s-1}\bm\pi\bm P_A^r\bm P_B^v\dot{\bm P}_B\bm1\bigg],
$$
where $\bm1$ denotes a column vector of $1$s with entries indexed by $\Sigma_0$.  Let $\{X_n\}_{n\ge0}$ be a nonhomogeneous Markov chain in $\Sigma_0$ with one-step transition matrices $\bm P_A,\ldots,\bm P_A$ $(r\text{ times})$, $\bm P_B,\ldots,\bm P_B$ $(s\text{ times})$, $\bm P_A,\ldots,\bm P_A$ $(r\text{ times})$, $\bm P_B,\ldots,\bm P_B$ $(s\text{ times})$, and so on, and let the initial distribution be arbitrary.  For each $n\ge1$, define $\xi_n:=w(X_{n-1},X_n)$ and $S_n:=\xi_1+\cdots+\xi_n$.  Then $\lim_{n\to\infty}n^{-1}S_n=\mu_{[r,s]}$ {\rm a.s.}
\end{theorem}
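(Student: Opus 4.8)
The plan is to reduce this to the classical strong law of large numbers for positive-recurrent homogeneous Markov chains by grouping the nonhomogeneous chain into blocks of length $r+s$. First I would define the block chain: for $k\ge0$ let $Y_k:=X_{k(r+s)}$, so that $\{Y_k\}_{k\ge0}$ is a homogeneous Markov chain on $\Sigma_0$ with one-step transition matrix $\bm P=\bm P_A^r\bm P_B^s$, which is ergodic by hypothesis and so has the unique stationary distribution $\bm\pi$. The increments I want to control are the block sums $\eta_k:=\xi_{k(r+s)+1}+\cdots+\xi_{(k+1)(r+s)}$, i.e.\ the total payoff accrued over one full $A^rB^s$ cycle starting from $Y_k$. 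The key computation is that the conditional expectation of $\eta_k$ given $Y_k=\bm x$ depends only on $\bm x$: conditioning successively on the intermediate states $X_{k(r+s)+1},\ldots,X_{(k+1)(r+s)-1}$ and using $\E[w(X_{m-1},X_m)\mid X_{m-1}]=(\bm P_\bullet\circ\bm W)\bm1$ evaluated at $X_{m-1}$, where $\bm P_\bullet$ is whichever of $\bm P_A,\bm P_B$ applies at step $m$, one obtains
$$
\E[\eta_k\mid Y_k=\bm x]=\bigg(\sum_{u=0}^{r-1}\bm P_A^u\dot{\bm P}_A\bm1+\sum_{v=0}^{s-1}\bm P_A^r\bm P_B^v\dot{\bm P}_B\bm1\bigg)(\bm x)=:f(\bm x).
$$
Then $(r+s)\,\mu_{[r,s]}=\bm\pi f$ by the definition of $\mu_{[r,s]}$ in the statement.

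Next I would invoke the SLLN for functionals of an ergodic finite Markov chain: since $\{Y_k\}$ is ergodic with stationary distribution $\bm\pi$ and $f$ is bounded on the finite set $\Sigma_0$,
$$
\frac1K\sum_{k=0}^{K-1}f(Y_k)\longrightarrow \bm\pi f=(r+s)\,\mu_{[r,s]}\quad\text{a.s.}
$$
regardless of the initial distribution. This is not immediately the statement, because $S_n$ involves the actual payoffs $\eta_k$, not their conditional means $f(Y_k)$. To bridge the gap I would consider the martingale differences $D_k:=\eta_k-f(Y_{k-1})$ with respect to the filtration $\mathcal F_k:=\sigma(X_0,\ldots,X_{k(r+s)})$; these are bounded (each $\eta_k$ is a sum of $r+s$ bounded terms), hence $\sum_k D_k/k$ converges a.s.\ and, by Kronecker's lemma, $K^{-1}\sum_{k=1}^K D_k\to0$ a.s. Combining, $K^{-1}\sum_{k=0}^{K-1}\eta_k\to(r+s)\,\mu_{[r,s]}$ a.s., i.e.\ $(K(r+s))^{-1}S_{K(r+s)}\to\mu_{[r,s]}$ a.s.

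Finally I would remove the restriction to $n$ a multiple of $r+s$: for general $n$ write $n=K(r+s)+\ell$ with $0\le\ell<r+s$, and note $|S_n-S_{K(r+s)}|\le(r+s)\max_{i,j}|w(i,j)|$, a fixed constant, so $n^{-1}S_n$ and $n^{-1}S_{K(r+s)}$ have the same a.s.\ limit as $n\to\infty$. This yields $\lim_n n^{-1}S_n=\mu_{[r,s]}$ a.s. I do not expect a genuine obstacle here; the only point requiring care is that the ergodicity hypothesis is imposed on $\bm P$ \emph{and all its cyclic permutations}, which is exactly what guarantees that whichever state $X_0$ (and hence $Y_0$) starts in, the block chain started at any offset is still ergodic — this is used implicitly to know the conclusion holds for an arbitrary initial distribution and, if one prefers, to phase-shift the argument. (In fact only ergodicity of $\bm P$ itself is needed for the version with the block chain anchored at multiples of $r+s$; the cyclic-permutation hypothesis is what makes the limit insensitive to where one starts reading the pattern, which matches its use in Lemma~\ref{ergodic}.) The verification that $\E[\eta_k\mid Y_k=\bm x]=f(\bm x)$ is the computational heart of the argument, but it is a routine iterated-conditioning telescoping and presents no real difficulty.
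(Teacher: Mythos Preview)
The paper does not give its own proof of this theorem: it is quoted verbatim from \cite{EL09} (``We will need the following version of the strong law of large numbers from \cite{EL09}''), so there is nothing in the present paper to compare your argument against.

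On its own merits your proposal is sound and is the standard route: block the nonhomogeneous chain into the homogeneous chain $\{Y_k\}$ with transition matrix $\bm P_A^r\bm P_B^s$, compute the conditional block payoff, apply the ergodic theorem to $f(Y_k)$, and absorb the within-block randomness via a bounded martingale-difference argument and Kronecker's lemma. One small indexing slip: with your definitions $\eta_k=\xi_{k(r+s)+1}+\cdots+\xi_{(k+1)(r+s)}$ and $Y_k=X_{k(r+s)}$, the conditioning state is $Y_k$, not $Y_{k-1}$, so the martingale differences should be $D_k:=\eta_k-f(Y_k)$, which are $\mathcal F_{k+1}$-measurable with $\E[D_k\mid\mathcal F_k]=0$. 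Your closing remark is also accurate: for the argument as written, ergodicity of $\bm P_A^r\bm P_B^s$ alone suffices; the cyclic-permutation hypothesis is there so that the same conclusion holds if one anchors the blocks at any offset within the period (and is what the paper actually verifies in Lemma~\ref{ergodic}).
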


\begin{remark}
Under an additional assumption there is also a central limit theorem.
\end{remark}

Theorem \ref{SLLN-EL09} applies not to $\bm P_A$ and $\bm P_B$ of Section~\ref{Markov} but to analogous one-step transition matrices on a slightly more informative state space.  The new state space is
$\Sigma^*:=\Sigma\times\{1,2,\ldots,N\}$ and the process is in state $(\bm x,i)$ if $\bm x$ describes the status of each player and $i$ is the next player to play.  Given $N\ge3$ and $p_0,p_1,p_2,p_3\in[0,1]$, we define $\bm P_B^*$ by
\begin{equation*}
P_B^*((\bm x,i),(\bm x^i,j)):=\begin{cases}N^{-1}p_{m_i(\bm x)}&\text{if $x_i=0$,}\\N^{-1}q_{m_i(\bm x)}&\text{if $x_i=1$,}\end{cases}
\end{equation*}
\begin{equation*}
P_B^*((\bm x,i),(\bm x,j)):=\begin{cases}N^{-1}q_{m_i(\bm x)}&\text{if $x_i=0$,}\\N^{-1}p_{m_i(\bm x)}&\text{if $x_i=1$,}\end{cases}
\end{equation*}
for all $(\bm x,i)\in\Sigma^*$ and $j=1,2,\ldots,N$, where $q_m:=1-p_m$ for $m=0,1,2,3$, and $P_B^*((\bm x,i),(\bm y,j))=0$ otherwise.

We further define $\bm P_A^*$ to be $\bm P_B^*$ with $p_0=p_1=p_2=p_3=1/2$.

\begin{lemma}
Let $N\ge3$ and $p_0,p_1,p_2,p_3\in[0,1]$. Fix $r,s\ge1$ and put $\bm P_1^*:=({\bm P}_A^*)^{r-1}({\bm P}_B^*)^s\bm P_A^*$, \dots, $\bm P_r^*:=({\bm P}_B^*)^s({\bm P}_A^*)^r$, $\bm P_{r+1}^*:=({\bm P}_B^*)^{s-1}({\bm P}_A^*)^r\bm P_B^*$, \dots, and $\bm P_{r+s}^*:=({\bm P}_A^*)^r({\bm P}_B^*)^s$.  (These are the $r+s$ cyclic permutations of $({\bm P}_A^*)^r({\bm P}_B^*)^s$.)

\emph{($i$)} The Markov chain in $\Sigma^*$ with one-step transition matrix $\bm P_1^*$, $\bm P_2^*$, \dots, or $\bm P_r^*$ is irreducible and aperiodic.  In particular, it is ergodic. 

\emph{($ii$)} The Markov chain in $\Sigma^*$ with one-step transition matrix $\bm P_{r+1}^*$, $\bm P_{r+2}^*$, \dots, or $\bm P_{r+s}^*$ has the following behavior.  There exists a (possibly empty) proper subset $T\subset\Sigma$ such that $T\times\{1,2,\ldots,N\}$ is transient and $(\Sigma-T)\times\{1,2,\ldots,N\}$ is closed, irreducible, and aperiodic.  In particular, the Markov chain is ergodic.  In fact, the set $T$, which does not depend on $r$ or $s$, is as in Lemma \ref{ergodic}.  

Let $\bm\pi^*$ denote the unique stationary distribution for $({\bm P}_A^*)^r({\bm P}_B^*)^s$, and let $\bm\pi$ denote the unique stationary distribution for $\bm P_A^r\bm P_B^s$.  Then 
\begin{equation}\label{indep}
\bm\pi^*(\bm P_A^*)^r(\bm P_B^*)^v=\bm\pi\bm P_A^r\bm P_B^v\times{\rm uniform}\{1,2,\ldots,N\}
\end{equation}
for $v=0,1,\ldots,s-1$.  Also, $\bm\pi^*=\bm\pi\times{\rm uniform}\{1,2,\ldots,N\}$.
\end{lemma}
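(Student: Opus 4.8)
The plan is to reduce the whole statement to Lemma~\ref{ergodic}, using the fact that in the starred chain the ``next player'' coordinate is resampled uniformly at \emph{every} atomic step. The linchpin is the intertwining identity
\begin{equation*}
(\bm\mu\times{\rm uniform}\{1,\ldots,N\})\,\bm P_B^*=(\bm\mu\bm P_B)\times{\rm uniform}\{1,\ldots,N\},
\end{equation*}
valid for every probability row vector $\bm\mu$ on $\Sigma$, together with its analogue for $\bm P_A^*$. I would prove it by direct computation. Writing $\bm e_{\bm x}$ for the unit row vector at $\bm x$, the quantity $[(\bm e_{\bm x}\times{\rm uniform})\bm P_B^*](\bm y,j)=\sum_i N^{-1}P_B^*((\bm x,i),(\bm y,j))$ is nonzero only when $\bm y=\bm x$ (contributing $N^{-1}\sum_i N^{-1}(\text{no-flip probability of player }i\text{ in }\bm x)=N^{-1}P_B(\bm x,\bm x)$) or when $\bm y=\bm x^i$ for the unique coordinate $i$ at which $\bm x$ and $\bm y$ differ (the sum over $i$ collapsing to this one term, which contributes $N^{-1}\cdot N^{-1}(\text{flip probability of player }i\text{ in }\bm x)=N^{-1}P_B(\bm x,\bm x^i)$); here one uses $m_i(\bm x)=m_i(\bm y)$, since $\bm x$ and $\bm y$ agree at $i\pm1$. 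Summing over the at most $N+1$ relevant $\bm y$'s yields $N^{-1}(\bm\mu\bm P_B)(\bm y)$, which does not depend on $j$. Iterating, $(\bm\mu\times{\rm uniform})(\bm P_A^*)^r(\bm P_B^*)^v=(\bm\mu\bm P_A^r\bm P_B^v)\times{\rm uniform}$ for all $v\ge0$. The same bookkeeping gives the one-sided fact that $P_A^*((\bm x,i),(\bm y,j))>0$ implies $P_A(\bm x,\bm y)>0$, and likewise for $B$.

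For part~($ii$), each of $\bm P_{r+1}^*,\ldots,\bm P_{r+s}^*$ ends in the factor $\bm P_B^*$, so by the one-sided fact and~(\ref{closed}) none of them carries the $\Sigma$-coordinate into $T$; hence $T\times\{1,\ldots,N\}$ is transient and its complement closed, exactly as in the proof of Lemma~\ref{ergodic}. For irreducibility and aperiodicity of the closed class I treat $\bm P_{r+s}^*$, the others being similar. Given $\bm x\in\Sigma-T$, property~(\ref{aper}) produces a site $i_0=i_0(\bm x)$ at which the no-flip probability in $\bm x$ is positive, hence at which $\bm P_B^*$ has a self-loop at $(\bm x,i_0)$. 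Using the ever-available self-loops of $\bm P_A^*$ (with free choice of the resulting player) to steer the player to $i_0$ during the $\bm P_A^*$-block, and then the $\bm P_B^*$-self-loop at $i_0$ (which may land on any player) throughout the $\bm P_B^*$-block, one checks $[\bm P_{r+s}^*]((\bm x,i),(\bm x,i'))>0$ for all $i,i'\in\{1,\ldots,N\}$; in particular every state in the closed class has a self-loop, giving aperiodicity. For irreducibility, let $\bm x,\bm y\in\Sigma-T$; by Lemma~\ref{ergodic}($ii$) there is an $n$ with $(\bm e_{\bm x}\bm P_{r+s}^n)(\bm y)>0$, so by the iterated intertwining $(\bm e_{\bm x}\times{\rm uniform})(\bm P_{r+s}^*)^n=(\bm e_{\bm x}\bm P_{r+s}^n)\times{\rm uniform}$ charges $(\bm y,j)$ for every $j$; since this vector is the uniform average over $i''$ of $(\bm e_{(\bm x,i'')})(\bm P_{r+s}^*)^n$, some $i''$ satisfies $[(\bm P_{r+s}^*)^n]((\bm x,i''),(\bm y,j))>0$, and composing with the transition $(\bm x,i)\to(\bm x,i'')$ from the previous sentence yields $(\bm x,i)\rightsquigarrow(\bm y,j)$. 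Part~($i$) is handled the same way: the relevant cyclic permutations now end in $\bm P_A^*$, so there is no closedness obstruction, and one mirrors the argument of Lemma~\ref{ergodic}($i$), including its treatment of the exceptional-$T$ cases; those need nothing new, since on $(\Sigma-T)\times\{1,\ldots,N\}$ properties~(\ref{aper}) and~(\ref{closed}) already hold.

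The stationary-distribution assertions are then immediate. Taking $\bm\mu=\bm\pi$ and $v=s$ in the iterated intertwining gives $(\bm\pi\times{\rm uniform})(\bm P_A^*)^r(\bm P_B^*)^s=(\bm\pi\bm P_A^r\bm P_B^s)\times{\rm uniform}=\bm\pi\times{\rm uniform}$, so $\bm\pi\times{\rm uniform}\{1,\ldots,N\}$ is stationary for $(\bm P_A^*)^r(\bm P_B^*)^s$; by the uniqueness established in part~($ii$), $\bm\pi^*=\bm\pi\times{\rm uniform}\{1,\ldots,N\}$. Finally, for $v=0,1,\ldots,s-1$, $\bm\pi^*(\bm P_A^*)^r(\bm P_B^*)^v=(\bm\pi\times{\rm uniform})(\bm P_A^*)^r(\bm P_B^*)^v=(\bm\pi\bm P_A^r\bm P_B^v)\times{\rm uniform}$, which is~(\ref{indep}).

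I expect the main obstacle to be the intertwining identity itself. Although its proof is only the two-case computation above, the whole reduction stands or falls on matching the flip/no-flip probabilities \emph{exactly} between the starred and unstarred chains, and the crucial (easily overlooked) point is that a flip at site $i$ does not change $m_i$, so the coin tossed in state $(\bm x,i)$ coincides with the relevant coin in $\bm P_B$. Once this is in hand, parts~($i$) and~($ii$) and the formula for $\bm\pi^*$ follow with only the routine path arguments sketched above.
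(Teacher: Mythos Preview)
Your argument is correct and rests on the same underlying observation as the paper's proof---that the ``next player'' coordinate is resampled uniformly and independently at every atomic step---but you package it differently. The paper argues probabilistically: starting from an arbitrary stationary $\bm\pi^*$, it runs the nonhomogeneous chain $(\bm X(n),I(n))$, shows directly that $I(n)$ is uniform and independent of $\bm X(n)$ for $n\ge1$, verifies that $\{\bm X(n)\}$ is Markov with the unstarred transition matrices, and thereby identifies $\bm\pi^*$ as $\bm\pi\times\text{uniform}$. You instead isolate the algebraic intertwining $(\bm\mu\times\text{uniform})\bm P_B^*=(\bm\mu\bm P_B)\times\text{uniform}$, build the candidate $\bm\pi\times\text{uniform}$, and invoke uniqueness; this is cleaner and makes~(\ref{indep}) fall out immediately. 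For irreducibility, the paper lifts a $\bm P_{r+s}$-path in $\Sigma-T$ to a $\bm P_{r+s}^*$-path by choosing intermediate player labels backward from the target, whereas you average over the starting label and use the intertwining again---both work, and yours avoids the backward label-selection. One small remark: the observation $m_i(\bm x)=m_i(\bm x^i)$ is true but not actually needed in your intertwining computation, since both $P_B^*((\bm x,i),(\bm x^i,j))$ and $P_B(\bm x,\bm x^i)$ are defined directly in terms of $m_i(\bm x)$.
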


\begin{proof}
Let $\bm\pi^*$ be stationary for $(\bm P_A^*)^r(\bm P_B^*)^s$.  We will show that it has the form stated in the lemma.  Let $\bm X^*(0),\bm X^*(1),\ldots$ be a nonhomogeneous Markov chain in $\Sigma^*$ with transition matrices $\bm P_A^*,\ldots,\bm P_A^*$ ($r$ times), $\bm P_B^*,\ldots,\bm P_B^*$ ($s$ times), $\bm P_A^*,\ldots,\bm P_A^*$ ($r$ times), $\bm P_B^*,\ldots,\bm P_B^*$ ($s$ times), and so on, and initial distribution $\bm\pi^*$.  Then $\bm X^*(r+s)$ has distribution $\bm\pi^*(\bm P_A^*)^r(\bm P_B^*)^s=\bm\pi^*$.  On the other hand, writing $\bm X^*(n)=:(\bm X(n),I(n))$ for each $n\ge0$, we claim that, for each $n\ge1$, $I(n)$ is independent of $\bm X(n)$ and is uniform$\{1,2,\ldots,N\}$, conditionally on $(\bm X(n-1),I(n-1))$, hence also unconditionally.  This follows from $P_B^*((\bm x,i),(\bm x^i,j))=N^{-1}c_i(\bm x)$ and $P_B^*((\bm x,i),(\bm x,j))=N^{-1}[1-c_i(\bm x)]$, where $c_i(\bm x):=p_{m_i(\bm x)}$ if $x_i=0$ and $c_i(\bm x):=q_{m_i(\bm x)}$ if $x_i=1$ (and similarly for $\bm P_A^*$ but with $p_0=p_1=p_2=p_3=1/2$).  Since $\bm X^*(0)$ has the same distribution as $\bm X^*(r+s)$, we find that $I(n)$ is independent of $\bm X(n)$ and is uniform$\{1,2,\ldots,N\}$ for each $n\ge0$ (not just $n\ge1$).  We next claim that $\bm X(0),\bm X(1),\ldots$ is a nonhomogeneous Markov chain in $\Sigma$ with transition matrices $\bm P_A,\ldots,\bm P_A$ ($r$ times), $\bm P_B,\ldots,\bm P_B$ ($s$ times), $\bm P_A,\ldots,\bm P_A$ ($r$ times), $\bm P_B,\ldots,\bm P_B$ ($s$ times), and so on, and initial distribution $\bm\pi$, $\bm x$-marginal of $\bm\pi^*$.  The Markov property is essentially a consequence of identities such as
\begin{eqnarray*}
&&\P(\bm X(r+s)=\bm x^i\mid\bm X(r+s-1)=\bm x)\\
&&\quad{}={\P(\bm X(r+s)=\bm x^i,\bm X(r+s-1)=\bm x)\over\P(\bm X(r+s-1)=\bm x)}\\
&&\quad{}={\P((\bm X(r+s),I(r+s))=(\bm x^i,\cdot),(\bm X(r+s-1),I(r+s-1))=(\bm x,i))\over\P((\bm X(r+s-1),I(r+s-1))=(\bm x,\cdot))}\\
&&\quad{}={\P((\bm X(r+s),I(r+s))=(\bm x^i,j),(\bm X(r+s-1),I(r+s-1))=(\bm x,i))\over\P((\bm X(r+s-1),I(r+s-1))=(\bm x,i))}\\
&&\quad{}=\P((\bm X(r+s),I(r+s))=(\bm x^i,j)\mid(\bm X(r+s-1),I(r+s-1))=(\bm x,i))\\
&&\quad{}=P_B^*((\bm x,i),(\bm x^i,j))\\
&&\quad{}=P_B(\bm x,\bm x^i),
\end{eqnarray*}
where, for example, $I(r+s)=\cdot$ means that the value of $I(r+s)$ is unspecified.

Since $\bm X(r+s)$ has distribution $\bm\pi\bm P_A^r\bm P_B^s$ as well as distribution $\bm\pi$, we see that $\bm\pi$ is the unique stationary distribution for $\bm P_A^r\bm P_B^s$, as assumed in the statement of the lemma.  Finally, $\bm\pi^*$, being the distribution of $\bm X^*(0)=(\bm X(0),I(0))$, must equal $\bm\pi\times{\rm uniform}\{1,2,\ldots,N\}$, and the last conclusion of the lemma follows.  For $v=0,1,\ldots,s-1$, $(\bm X(r+v),I(r+v))$ has distribution $\bm\pi^*(\bm P_A^*)^r(\bm P_B^*)^v$ while $\bm X(r+v)$ has distribution $\bm\pi\bm P_A^r\bm P_B^v$, so by the independence result of the preceding paragraph, (\ref{indep}) follows.

It remains to prove the assertions about $\bm P_1^*,\ldots,\bm P_{r+s}^*$.  Let us first treat the case of $\bm P_{r+s}^*$, the cases of $\bm P_{r+1}^*,\ldots,\bm P_{r+s-1}^*$ being similar.  If $i\in\{1,2,\ldots,N\}$ and $(\bm y,j)\in T\times\{1,2,\ldots,N\}$, then $P_B^*((\bm y^i,i),(\bm y,j))=P_B(\bm y^i,\bm y)=0$ and $P_B^*((\bm y,i),(\bm y,j))\le P_B(\bm y,\bm y)=0$, so for all $(\bm x,i)\in\Sigma^*$ and $(\bm y,j)\in T\times\{1,2,\ldots,N\}$, $P_{r+s}^*((\bm x,i),(\bm y,j))=0$.  This implies the transience of $T\times\linebreak\{1,2,\ldots,N\}$ and the closedness of $(\Sigma-T)\times\{1,2,\ldots,N\}$.  As for the irreducibility of $(\Sigma-T)\times\{1,2,\ldots,N\}$, let $(\bm x,i)$ and $(\bm y,j)$ belong to this set.  Let $\bm x_0:=\bm x$ and let $\bm x_1\in\Sigma-T$ be such that $P_{r+s}^*((\bm x_0,i),(\bm x_1,k))>0$ for all $k\in\{1,2,\ldots,N\}$.  By the irreducibility of $\bm P_{r+s}$ on $\Sigma-T$ (Lemma \ref{ergodic}), there exist $\bm x_2,\ldots,\bm x_n=\bm y$ such that $\bm x_l\in\Sigma-T$, $\bm x_{l-1}\ne\bm x_l$, and $P_{r+s}(\bm x_{l-1},\bm x_l)>0$ for $l=2,\ldots,n$.  Then there also exist $k_1,\ldots,k_{n-1}\in\{1,2,\ldots,N\}$ such that $P_{r+s}^*((\bm x_{l-1},k_{l-1}),(\bm x_l,k_l))>0$ for $l=2,\ldots,n$ with $k_n:=j$.  With $k_0:=i$, this also holds for $l=1$, so we have $P_{r+s}^*((\bm x,i),(\bm y,j))>0$.  For the aperiodicity of $(\Sigma-T)\times\{1,2,\ldots,N\}$, we need only show that, for some $(\bm x,i)$ belonging to this set,
$P_{r+s}^*((\bm x,i),(\bm x,i))>0$.  Let $\bm x\in\Sigma-T$.  Then $P_B(\bm x,\bm x)>0$, so there exists $i_0\in\{1,2,\ldots,N\}$ such that $P_B^*((\bm x,i_0),(\bm x,i_0))>0$.  Hence
\begin{eqnarray*}
P_{r+s}^*((\bm x,i_0),(\bm x,i_0))&\ge&[P_A^*((\bm x,i_0),(\bm x,i_0))]^r[P_B^*((\bm x,i_0),(\bm x,i_0))]^s>0.
\end{eqnarray*}

Finally, we treat the case of $\bm P_r^*$, the cases of $\bm P_1^*,\ldots,\bm P_{r-1}^*$ being similar.  For irreducibility, let $(\bm x,i)$ and $(\bm y,j)$ belong to $\Sigma^*$.  Let $\bm x_0:=\bm x$ and let $\bm x_1$ be such that $P_r^*((\bm x_0,i),(\bm x_1,k))>0$ for all $k\in\{1,2,\ldots,N\}$.  Then, by the irreducibility of $\bm P_r$ (Lemma \ref{ergodic}), there exist $\bm x_2,\bm x_3,\ldots,\bm x_n=\bm y$ such that $\bm x_{l-1}\ne\bm x_l$ and $P_r(\bm x_{l-1},\bm x_l)>0$ for $l=2,\ldots,n$.  Then there also exist $k_1,\ldots,k_{n-1}\in\{1,2,\ldots,N\}$ such that $P_r^*((\bm x_{l-1},k_{l-1}),(\bm x_l,k_l))>0$ for $l=2,\ldots,n$ with $k_n:=j$.  With $k_0:=i$, this also holds for $l=1$, so we have $P_r^*((\bm x,i),(\bm y,j))>0$.  For aperiodicity, we need only show that, for some $(\bm x,i)\in\Sigma^*$,
$P_r^*((\bm x,i),(\bm x,i))>0$.  Let $\bm x\in\Sigma-T$.  Then the argument is as in the preceding paragraph.
\end{proof}

Notice also that the profit corresponding to each nonzero entry of $\bm P_B^*$ is equal to $\pm1$, so Theorem \ref{SLLN-EL09} applies and there are several formulas for the mean profit, as we now show.

\begin{theorem}\label{SLLN-thm}
Given $r,s\ge1$,
let $\bm\pi$ be the unique stationary distribution for the one-step transition matrix $\bm P_A^r\bm P_B^s$.  Let $\{(\bm X(n), I(n))\}_{n\ge0}$ be a nonhomogeneous Markov chain in $\Sigma^*$ with one-step transition matrices $\bm P_A^*,\ldots,\bm P_A^*$ $(r\text{ times})$, $\bm P_B^*,\ldots,\bm P_B^*$ $(s\text{ times})$, $\bm P_A^*,\ldots,\bm P_A^*$ $(r\text{ times})$, $\bm P_B^*,\ldots,\bm P_B^*$ $(s\text{ times})$, and so on, and arbitrary initial distribution.  Define 
$$
\xi_n:=w((\bm X(n-1),I(n-1)),(\bm X(n),I(n))), \qquad n\ge1,
$$
where the payoff function $w$ is 1 for a win and $-1$ for a loss, determined by whether the corresponding entry of $\bm P_B^*$ is of the form $N^{-1}p_m$ or $N^{-1}q_m$.  Let $S_n:=\xi_1+\cdots+\xi_n$ for each $n\ge1$.  Then $n^{-1}S_n\to\mu_{[r,s]}^N$ {\rm a.s.} as $n\to\infty$, where the mean profit $\mu_{[r,s]}^N$ can be expressed in terms of $\bm\pi\bm P_A^r\bm P_B^v$ as 
\begin{equation}\label{mu1}
\mu_{[r,s]}^N={1\over r+s}\sum_{v=0}^{s-1}\sum_{\bm x\in\Sigma}[\bm\pi\bm P_A^r\bm P_B^v](\bm x){1\over N}\sum_{i=1}^N [p_{m_i(\bm x)}-q_{m_i(\bm x)}],
\end{equation}
in terms of the $1,3$ two-dimensional marginals of $\bm\pi\bm P_A^r\bm P_B^v$ as
\begin{eqnarray}\label{mu2}
\mu_{[r,s]}^N&=&{1\over r+s}\sum_{v=0}^{s-1}\sum_{w=0}^1\sum_{z=0}^1[\bm\pi\bm P_A^r\bm P_B^v]_{1,3}(w,z)(p_{2w+z}-q_{2w+z}),
\end{eqnarray}
or in terms of the one-dimensional marginals of $\bm\pi\bm P_A^u$ and $\bm\pi\bm P_A^r\bm P_B^v$ as
\begin{eqnarray}\label{mu3}
\mu_{[r,s]}^N&=&{1\over r+s}\bigg[\sum_{u=0}^{r-1}\{[\bm\pi\bm P_A^u]_1(1)-[\bm\pi\bm P_A^u]_1(0)\}\\
&&\qquad\qquad{}+\sum_{v=0}^{s-1}\{[\bm\pi\bm P_A^r\bm P_B^v]_1(1)-[\bm\pi\bm P_A^r\bm P_B^v]_1(0)\}\bigg].\nonumber
\end{eqnarray}
In the special case $s=1$, (\ref{mu3}) takes the simpler form
\begin{equation}\label{mu4}
\mu_{[r,1]}^N={N[1-(1-1/N)^{r+1}]\over(r+1)(1-1/N)^r}\{[\bm\pi\bm P_A^r]_1(1)-[\bm\pi\bm P_A^r]_1(0)\}.
\end{equation}
\end{theorem}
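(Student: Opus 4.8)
The plan is to obtain \eqref{mu1} directly from the abstract strong law, Theorem~\ref{SLLN-EL09}, applied on $\Sigma^*$ with $\bm P_A^*,\bm P_B^*$ in the roles of $\bm P_A,\bm P_B$: the hypothesis that $(\bm P_A^*)^r(\bm P_B^*)^s$ and all of its cyclic permutations are ergodic is precisely the content of the preceding lemma. The only thing to compute is the effect of the Hadamard products of $\bm P_A^*$ and $\bm P_B^*$ with the payoff matrix $\bm W$. Because game $A$ is fair, $\dot{\bm P}_A^*\bm1=\bm0$, so the game-$A$ sum in the abstract formula for $\mu_{[r,s]}$ vanishes; and from the explicit form of $\bm P_B^*$ one reads off $[\dot{\bm P}_B^*\bm1](\bm x,i)=p_{m_i(\bm x)}-q_{m_i(\bm x)}$, the same whether $x_i=0$ or $x_i=1$ and independent of the next player $j$. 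Substituting the identity \eqref{indep}, $\bm\pi^*(\bm P_A^*)^r(\bm P_B^*)^v=\bm\pi\bm P_A^r\bm P_B^v\times{\rm uniform}\{1,\dots,N\}$, then collapses $\bm\pi^*(\bm P_A^*)^r(\bm P_B^*)^v\dot{\bm P}_B^*\bm1$ to $\sum_{\bm x}[\bm\pi\bm P_A^r\bm P_B^v](\bm x)\,N^{-1}\sum_{i=1}^N(p_{m_i(\bm x)}-q_{m_i(\bm x)})$, which is \eqref{mu1}.

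Formula \eqref{mu2} is then a rotation-invariance rewriting of \eqref{mu1}. By Lemma~\ref{invariance}, applied to the cyclic permutation $\bm P_{r+v}$ (which is $G$-invariant for $G$ the group of rotations and has stationary distribution $\bm\pi\bm P_A^r\bm P_B^v$), for every $i$ the joint law of $(x_{i-1},x_{i+1})$ under $\bm\pi\bm P_A^r\bm P_B^v$ coincides with the $1,3$ two-dimensional marginal; averaging the inner sum in \eqref{mu1} over $i$ and grouping according to the value $(w,z)$ of $(x_{i-1},x_{i+1})$, so that $m_i(\bm x)=2w+z$, yields \eqref{mu2}.

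For \eqref{mu3} I would return to the process. The crucial observation is that the payoff on turn $k$ is $+1$ or $-1$ exactly according to whether the player who played ends that turn as a winner or as a loser, i.e., $\xi_k=2X_{I(k-1)}(k)-1$. Since only coordinate $I(k-1)$ changes on turn $k$, writing $W_k:=\sum_{i=1}^N X_i(k)$ we have $X_{I(k-1)}(k)=W_k-W_{k-1}+X_{I(k-1)}(k-1)$; and since $I(k-1)$ is uniform on $\{1,\dots,N\}$ and independent of $\bm X(k-1)$ (established in the preceding lemma), $\E[X_{I(k-1)}(k-1)]=N^{-1}\E[W_{k-1}]$, hence $\E[\xi_k]=2\E[W_k]-2(1-1/N)\E[W_{k-1}]-1$. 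Running the chain from $\bm\pi$ and summing over one period $k=1,\dots,r+s$, the identity $\bm\pi\bm P_A^r\bm P_B^s=\bm\pi$ gives $\E[W_0]=\E[W_{r+s}]$, so the $\E[W]$-terms telescope to $\mu_{[r,s]}^N=\frac{2}{N(r+s)}\sum_{k=1}^{r+s}\E[W_k]-1$. Using $\bm\pi\bm P_A^r\bm P_B^s=\bm\pi$ once more to fold the last game-$B$ term into the game-$A$ block, $\sum_{k=1}^{r+s}\E[W_k]=\sum_{u=0}^{r-1}\E_{\bm\pi\bm P_A^u}[W]+\sum_{v=0}^{s-1}\E_{\bm\pi\bm P_A^r\bm P_B^v}[W]$, and rotation invariance replaces each $\E_{\bm\mu}[W]$ by $N[\bm\mu]_1(1)$; a short manipulation using $2[\bm\mu]_1(1)=([\bm\mu]_1(1)-[\bm\mu]_1(0))+1$ then gives \eqref{mu3}. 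Finally, \eqref{mu4} is the case $s=1$: one application of $\bm P_A$ resets a uniformly chosen coordinate to a fair coin, which gives the affine recursion $[\bm\pi\bm P_A^{u+1}]_1(1)-\frac12=(1-1/N)\big([\bm\pi\bm P_A^u]_1(1)-\frac12\big)$; expressing each $[\bm\pi\bm P_A^u]_1(1)-[\bm\pi\bm P_A^u]_1(0)=2\big([\bm\pi\bm P_A^u]_1(1)-\frac12\big)$ for $0\le u\le r$ in terms of the $u=r$ term and summing the finite geometric series produces the stated closed form.

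I expect no step to be genuinely hard; the only care needed is bookkeeping — invoking from the preceding lemma the independence of $I(k-1)$ from $\bm X(k-1)$ in the nonhomogeneous chain started at $\bm\pi^*$, carrying out the reindexing in \eqref{mu3} without an off-by-one error, and summing the geometric series in \eqref{mu4} with the powers of $1-1/N$ placed correctly.
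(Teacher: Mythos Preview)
Your derivations of \eqref{mu1} and \eqref{mu2} match the paper's exactly: apply Theorem~\ref{SLLN-EL09} on $\Sigma^*$, kill the $A$-sum via $\dot{\bm P}_A^*\bm1=\bm0$, read off $[\dot{\bm P}_B^*\bm1](\bm x,i)=p_{m_i(\bm x)}-q_{m_i(\bm x)}$, collapse using \eqref{indep}, and then rewrite by rotation invariance.

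For \eqref{mu3} and \eqref{mu4} your route is genuinely different from the paper's. The paper builds a two-sided stationary version of the nonhomogeneous chain, expresses $\pi_1^j(1)=\P(X_2(j)=1)$ by decomposing on the \emph{last} time before $j$ that player~2 was selected (a geometric waiting time with parameter $1/N$), and then sums the resulting identity over $j=0,\ldots,r+s-1$ using the periodicity trick in \eqref{geometric}; \eqref{mu4} is obtained by isolating the $j=r$ term of that same geometric computation. Your argument instead works forward one period: the identity $\xi_k=2X_{I(k-1)}(k)-1$ together with $W_k-W_{k-1}=X_{I(k-1)}(k)-X_{I(k-1)}(k-1)$ and the independence of $I(k-1)$ from $\bm X(k-1)$ yields $\E[\xi_k]=2\E[W_k]-2(1-1/N)\E[W_{k-1}]-1$, which telescopes over a period by stationarity to give \eqref{mu3} directly; \eqref{mu4} then drops out from the one-step contraction $[\bm\mu\bm P_A]_1(1)-\tfrac12=(1-1/N)\big([\bm\mu]_1(1)-\tfrac12\big)$ and a finite geometric sum. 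Your approach is shorter and avoids the two-sided construction; the paper's last-visit decomposition, on the other hand, yields the sharper pointwise relation between $\pi_1^r(1)-\pi_1^r(0)$ and the right side of \eqref{mu2} (used again in Theorem~\ref{p0=1,p3=0-thm}), not merely its period-average. Both are correct; your bookkeeping checks out, including the fold $\bm\pi\bm P_A^r\bm P_B^s=\bm\pi$ that shifts the index range to $u=0,\ldots,r-1$, $v=0,\ldots,s-1$.
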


\begin{remark}
Another formula for $\mu_{[r,s]}^N$, better suited to numerical computation, was given in \cite{EL12d}.
\end{remark}

\begin{proof}
Theorem~\ref{SLLN-EL09} gives the SLLN with 
$$
\mu_{[r,s]}^N={1\over r+s}\sum_{v=0}^{s-1}\bm\pi^*({\bm P_A^*})^r({\bm P_B^*})^v\dot{\bm P}_B^*\bm1,
$$
where $\dot{\bm P_B^*}$ is $\bm P_B^*$ with each $q_m$ replaced by $-q_m$ and $\bm1$ is a column vector of $1$s indexed by $\Sigma^*$; here we used $\dot{\bm P}_A^*\bm1=\bm0$.  Since $[\dot{\bm P}_B^*\bm1](\bm x,i)=p_{m_i(\bm x)}-q_{m_i(\bm x)}$, this and (\ref{indep}) imply (\ref{mu1}). 

Next, using (\ref{mu1}) and the rotation invariance property (see Lemma~\ref{invariance} and the discussion following it), we have
\begin{eqnarray*}
\mu_{[r,s]}^N&=&{1\over r+s}\sum_{v=0}^{s-1}\sum_{\bm x\in\Sigma}[\bm\pi\bm P_A^r\bm P_B^v](\bm x){1\over N}\sum_{i=1}^N [p_{m_i(\bm x)}-q_{m_i(\bm x)}]\\
&=&{1\over r+s}\sum_{v=0}^{s-1}{1\over N}\sum_{i=1}^N\sum_{w=0}^1\sum_{z=0}^1[\bm\pi\bm P_A^r\bm P_B^v]_{i-1,i+1}(w,z)(p_{2w+z}-q_{2w+z})\\
&=&{1\over r+s}\sum_{v=0}^{s-1}\sum_{w=0}^1\sum_{z=0}^1[\bm\pi\bm P_A^r\bm P_B^v]_{1,3}(w,z)(p_{2w+z}-q_{2w+z}),
\end{eqnarray*}
which is (\ref{mu2}).  In the second line, the $0,2$ and $N-1,N+1$ marginals are the $N,2$ and $N-1,1$ marginals.

Next, turning to (\ref{mu3}), we let $\{(\bm X(n),I(n))\}_{n\in{\bf Z}}$ be a nonhomogeneous Markov chain in $\Sigma^*$ with time parameter ranging over ${\bf Z}$, the set of integers, and with one-step transition matrices $\bm P_A^*$ from $(\bm X(n),I(n))$ if $n$ (mod $r+s$) belongs to $\{0,1,\ldots,r-1\}$ and $\bm P_B^*$ from $(\bm X(n),I(n))$ if $n$ (mod $r+s$) belongs to $\{r,r+1,\ldots,r+s-1\}$.  Assume that $(\bm X(0),I(0))$ has distribution $\bm\pi\times{\rm uniform}\{1,2,\ldots,N\}$.  Then $\{(\bm X((r+s)n+j),I((r+s)n+j))\}_{n\in{\bf Z}}$ is a stationary sequence for each $j\in{\bf Z}$ with $\bm X(j)$ having distribution, for $j=0,1,\ldots,r+s-1$,
$$
\bm\pi^j:=\begin{cases}\bm\pi\bm P_A^j&\text{if $j\in\{0,1,\ldots,r-1\}$,}\\
\bm\pi\bm P_A^r\bm P_B^{j-r}&\text{if $j\in\{r,r+1,\ldots,r+s-1\}$.}\end{cases}
$$
Therefore,
\begin{eqnarray*}
\pi_1^j(1)&=&\pi_2^j(1)=\P(X_2(j)=1)\\
&=&\sum_{k=j+1}^{j+r+s}\sum_{n=1}^\infty\P(X_2(-(r+s)n+k)=1, I(-(r+s)n+k-1)=2,\\
\noalign{\vglue-3mm}
&&\hskip1.5in{} I(-(r+s)n+k)\ne2,\ldots,I(j-1)\ne2)\\
&=&\sum_{k=j+1}^{j+r+s}\sum_{n=1}^\infty\bigg(1-{1\over N}\bigg)^{(r+s)n+j-k}\P(X_2(-(r+s)n+k)=1,\\
\noalign{\vglue-3mm}
&&\hskip2.in{} I(-(r+s)n+k-1)=2)\\
&=&\sum_{k=j+1}^{j+r+s}\sum_{n=1}^\infty\bigg(1-{1\over N}\bigg)^{(r+s)n+j-k}{1\over N}\sum_{w=0}^1\sum_{z=0}^1\pi_{1,3}^{k-1}(w,z)p_{2w+z}(k),
\end{eqnarray*}
where the last equality follows by conditioning on $X_1(-(r+s)n+k-1)$ and $X_3(-(r+s)n+k-1)$; here $\pi_{1,3}^{j+r+s}:=\pi_{1,3}^j$
and $p_m(k):=p_m$ if $k-1$ (mod $r+s$) belongs to $\{r,\ldots,r+s-1\}$ and $p_m(k):=1/2$ otherwise.  Using the fact that
\begin{eqnarray}\label{geometric}
&&\sum_{j=0}^{r+s-1}\sum_{k=j+1}^{j+r+s}\sum_{n=1}^\infty\bigg(1-{1\over N}\bigg)^{(r+s)n+j-k}{1\over N}\alpha(k)\\
&&\quad{}=\sum_{j=0}^{r+s-1}\sum_{l=1}^{r+s}\sum_{n=1}^\infty\bigg(1-{1\over N}\bigg)^{(r+s)n-l}{1\over N}\alpha(j+l)\nonumber\\
&&\quad{}=\sum_{l=1}^{r+s}{(1-1/N)^{r+s-l}\over N[1-(1-1/N)^{r+s}]}\sum_{j=0}^{r+s-1}\alpha(j+l)
=\sum_{l=1}^{r+s}\alpha(l)\nonumber
\end{eqnarray}
if $\alpha$ is periodic with period $r+s$, this implies that
$$
{1\over r+s}\sum_{j=0}^{r+s-1}\pi_1^j(1)={r\over r+s}\,{1\over2}+{1\over r+s}\sum_{v=0}^{s-1}\sum_{w=0}^1\sum_{z=0}^1[\bm\pi\bm P_A^r\bm P_B^v]_{1,3}(w,z)p_{2w+z}.
$$
Therefore, (\ref{mu3}) follows from (\ref{mu2}).

Finally, (\ref{mu4}) follows by replacing the sum over $j$ in (\ref{geometric}) by the $j=r$ term, assuming $s=1$:
\begin{eqnarray*}
&&\sum_{k=r+1}^{2r+1}\sum_{n=1}^\infty\bigg(1-{1\over N}\bigg)^{(r+1)n+r-k}{1\over N}\alpha(k)\\
&&\quad{}=\sum_{l=1}^{r+1}\sum_{n=1}^\infty\bigg(1-{1\over N}\bigg)^{(r+1)n-l}{1\over N}\alpha(r+l)\\
&&\quad{}=\sum_{l=1}^{r+1}{(1-1/N)^{r+1-l}\over N[1-(1-1/N)^{r+1}]}\alpha(r+l)\\
&&\quad{}={(1-1/N)^r\over N[1-(1-1/N)^{r+1}]}\alpha(r+1)+\bigg(1-{(1-1/N)^r\over N[1-(1-1/N)^{r+1}]}\bigg){1\over2}
\end{eqnarray*}
if $\alpha(1)=\cdots=\alpha(r)=\alpha(r+2)=\cdots=\alpha(2r+1)=1/2$.  This implies that
$$
\pi_1^r(1)-\pi_1^r(0)={(1-1/N)^r\over N[1-(1-1/N)^{r+1}]}\sum_{w=0}^1\sum_{z=0}^1[\bm\pi\bm P_A^r]_{1,3}(w,z)(p_{2w+z}-q_{2w+z}),
$$
and, combined with (\ref{mu2}), this yields (\ref{mu4}).
\end{proof}

We conclude with an application of the SLLN.

Let us denote $\mu_B^N$, the mean profit per turn to the ensemble of $N$ players always playing game $B$, by $\mu_B^N(p_0,p_1,p_2,p_3)$ to emphasize its dependence on the parameter vector.  As shown in \cite{EL12b},
\begin{equation}\label{couple B}
\mu_B^N(p_0,p_1,p_2,p_3)=-\mu_B^N(q_3,q_2,q_1,q_0),
\end{equation}
where $q_m:=1-p_m$ for $m=0,1,2,3$.

Fix $r,s\ge1$.  Let us denote $\mu_{[r,s]}^N$ of Theorem~\ref{SLLN-thm} by $\mu_{[r,s]}^N(p_0,p_1,p_2,p_3)$.  A similar argument (see \cite{EL12d}) implies that
\begin{equation}\label{couple [r,s]}
\mu_{[r,s]}^N(p_0,p_1,p_2,p_3)=-\mu_{[r,s]}^N(q_3,q_2,q_1,q_0).
\end{equation}

We say the \textit{Parrondo effect} is present if $\mu_B^N\le0$ and $\mu_{[r,s]}^N>0$, whereas the \textit{anti-Parrondo} effect is present if $\mu_B^N\ge0$ and $\mu_{[r,s]}^N<0$.  Eqs.\ (\ref{couple B}) and (\ref{couple [r,s]}) imply that the Parrondo effect is present for the parameter vector $(p_0,p_1,p_2,p_3)$ if and only if the anti-Parrondo effect is present for the parameter vector $(q_3,q_2,q_1,q_0)$.  Since the transformation
$$
\Lambda(p_0,p_1,p_2,p_3):=(1-p_3,1-p_2,1-p_1,1-p_0)
$$
from $(0,1)^4$ to $(0,1)^4$ has Jacobian identically equal to $1$, it follows that the ``Parrondo region'' and the ``anti-Parrondo region'' have the same (four-dimensional) volume.

Similarly, if we restrict attention to parameter vectors $(p_0,p_1,p_2,p_3)$ with $p_1=p_2$, then the Parrondo region and the anti-Parrondo region have the same (three-dimensional) volume.

\section{The case $p_0=1$, $p_3=0$}\label{p0=1,p3=0}

\begin{theorem}\label{p0=1,p3=0-thm}
Let $p_0=1$, $p_1=p_2\in(1/2,1)$, and $p_3=0$.  Let $\mu^N_B$ (resp., $\mu_{[r,s]}^N$) denote the mean profit per turn to the ensemble of $N\ge3$ players always playing game $B$ (resp., repeatedly playing the nonrandom pattern $[r,s]$, where $r,s\ge1$).  Then $\mu^N_B=0$ for all even $N\ge4$, $\mu^N_B>0$ for all odd $N\ge3$, and $\mu_{[r,1]}^N>0$ for all $N\ge3$ and $r\ge1$.  In particular, the Parrondo effect is present for the nonrandom pattern $[r,s]$ if and only if $N$ is even, at least when $s=1$.
\end{theorem}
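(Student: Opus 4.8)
The plan is to reduce each assertion to an inequality for the expected number of winners under an appropriate stationary distribution, and then to read that inequality off from a single drift identity. The two statements about $\mu_B^N$ do not involve $r$ or $s$ and are exactly those established for game $B$ in \cite{EL12c}, so I would simply quote them; in particular, for even $N$ the only recurrent states of $\bm P_B$ are the alternating configurations $0101\cdots$ and $1010\cdots$, on each of which there are as many winners as losers, whence $\mu_B^N=0$. For the new statement, note first that by Lemma~\ref{ergodic}($ii$)(f) the chain $\bm P_A^r\bm P_B$ is ergodic with unique stationary distribution $\bm\pi$ supported on $\Sigma\setminus\{\bm0,\bm1\}$, so Theorem~\ref{SLLN-thm} and its formula (\ref{mu4}) apply, and the prefactor in (\ref{mu4}) is strictly positive because $1-1/N\in(0,1)$. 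Writing $W(\bm x):=x_1+\cdots+x_N$ for the number of winners and using the rotation invariance of $\bm\pi\bm P_A^r$ (Lemma~\ref{invariance} and the discussion after it), one gets $[\bm\pi\bm P_A^r]_1(1)-[\bm\pi\bm P_A^r]_1(0)=2N^{-1}\E_{\bm\pi\bm P_A^r}[W]-1$, so everything reduces to proving $\E_{\bm\pi\bm P_A^r}[W]>N/2$ (and then the Parrondo-effect equivalence is immediate from the three parts).

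The main tool is a drift identity for $W$. Let $\Delta W$ denote the change of $W$ over one turn. A direct computation gives, on a game-$A$ turn, $N\,\E[\Delta W\mid\bm X(n)=\bm x]=N/2-W(\bm x)$ exactly, so a game-$A$ step multiplies $\delta:=\E[W]-N/2$ precisely by $1-1/N$. On a game-$B$ turn with $(p_0,p_1,p_2,p_3)=(1,p,p,0)$, I would group the sites $i$ according as $m_i(\bm x)=0$, $m_i(\bm x)\in\{1,2\}$, or $m_i(\bm x)=3$ — equivalently, according as $i$ is an interior site of a run of length $\ge3$, an endpoint of a run of length $\ge2$, or a singleton run of its colour — and use that the number of such endpoints of a given colour equals twice the number of runs of that colour of length $\ge2$, together with the fact that a non-constant cyclic configuration has equally many runs of $0$s and of $1$s. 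This yields
\begin{equation*}
N\,\E[\Delta W\mid\bm X(n)=\bm x]=\sum_{i:x_i=0}p_{m_i(\bm x)}-\sum_{i:x_i=1}q_{m_i(\bm x)}=N-2W(\bm x)+(2p-1)\,\beta(\bm x),
\end{equation*}
where $\beta(\bm x)$ is the number of maximal runs of $0$s of length $\ge2$ plus the number of maximal runs of $1$s of length $\ge2$, with the convention $\beta(\bm0)=\beta(\bm1):=0$. What matters is that $\beta(\bm x)\ge0$ always and $\beta(\bm x)\ge1$ unless $\bm x$ is constant or an alternating configuration (the latter forcing $N$ even); since $p>1/2$, the game-$B$ increment always dominates $N-2W(\bm x)$, strictly on the non-constant non-alternating states.

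To finish, use that $\bm\pi$ is stationary for the cycle $\bm P_A^r\bm P_B$, so the expected change of $W$ over one cycle is $0$. Breaking the cycle into its $r$ game-$A$ steps and one game-$B$ step, using that $\bm X$ has law $\bm\pi\bm P_A^{j}$ just before step $j+1$, writing $\delta_j:=\E_{\bm\pi\bm P_A^j}[W]-N/2=(1-1/N)^j\delta_0$, and applying the two identities, the balance collapses to
\begin{equation*}
\delta_0\Big(\sum_{j=0}^{r-1}(1-1/N)^j+2(1-1/N)^r\Big)=(2p-1)\,\E_{\bm\pi\bm P_A^r}[\beta].
\end{equation*}
The bracket on the left is positive, and a short argument from $\mathrm{supp}(\bm\pi)=\Sigma\setminus\{\bm0,\bm1\}$ and $r\ge1$ shows that $\bm\pi\bm P_A^r$ charges at least one non-constant non-alternating configuration, so $\E_{\bm\pi\bm P_A^r}[\beta]>0$; hence $\delta_0>0$, so $\delta_r>0$, i.e.\ $\E_{\bm\pi\bm P_A^r}[W]>N/2$, and $\mu_{[r,1]}^N>0$ by (\ref{mu4}). (The same identity applied to $\bm P_B$ itself — ergodic for odd $N$ by \cite{EL12c} — likewise gives $\E_{\bm\pi_B}[W]=N/2+\tfrac12(2p-1)\E_{\bm\pi_B}[\beta]>N/2$, recovering $\mu_B^N>0$.) I expect the one genuinely delicate point to be the game-$B$ drift identity: checking the grouping by $m_i$, verifying that the singleton-run counts cancel against each other (this is where the equality of the numbers of $0$-runs and $1$-runs enters) so that exactly the term $(2p-1)\beta(\bm x)$ survives, and handling the degenerate configurations $\bm0$, $\bm1$, and the alternating states; the remainder is routine manipulation of geometric series.
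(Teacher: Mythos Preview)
Your argument is correct and takes a genuinely different route from the paper. The paper combines two of its mean-profit formulas: from (\ref{mu4}) together with (\ref{1,3 symm}) it writes $\mu_{[r,1]}^N$ as a positive multiple of $\pi_{1,3}^r(1,1)-\pi_{1,3}^r(0,0)$, while from (\ref{mu2}) with $(p_0,p_1,p_2,p_3)=(1,p,p,0)$ it gets $(r+1)\mu_{[r,1]}^N=\pi_{1,3}^r(0,0)-\pi_{1,3}^r(1,1)+2(2p-1)\pi_{1,3}^r(0,1)$; eliminating $\pi_{1,3}^r(1,1)-\pi_{1,3}^r(0,0)$ between these two yields the closed form $\mu_{[r,1]}^N=\text{(explicit positive factor)}\cdot(2p-1)\,\pi_{1,3}^r(0,1)$, and positivity follows from the irreducibility of $\bm P_r$ on $\Sigma$ (Lemma~\ref{ergodic}($i$)). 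You instead read $\mu_{[r,1]}^N$ from (\ref{mu4}) alone as a positive multiple of $\E_{\bm\pi\bm P_A^r}[W]-N/2$ and control this by a cycle-drift (Lyapunov) identity for $W$. The two arguments are in fact the same computation in disguise: by rotation invariance a site $i$ has $x_{i-1}\ne x_{i+1}$ exactly when it is an endpoint of a run of length $\ge2$, and each such run has two endpoints, so $\E_{\bm\pi\bm P_A^r}[\beta]=N\,\pi_{1,3}^r(0,1)$; with this identification your balance equation rearranges to the paper's closed form. The paper's route is shorter and delivers an explicit expression; your drift argument is more probabilistic in flavor and suggests how one might attack general $s$. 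One small sharpening: the claim that $\bm\pi\bm P_A^r$ charges a state with $\beta\ge1$ is immediate from Lemma~\ref{ergodic}($i$), since $\bm\pi\bm P_A^r$ is the stationary distribution of the irreducible chain $\bm P_r=\bm P_B\bm P_A^r$ and hence has full support on $\Sigma$.
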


\begin{remark}
We expect that the condition $s=1$ is unnecessary for this result.
\end{remark}

\begin{proof}
The conclusions about game $B$ are from \cite{EL12c}.

Let $\pi_{1,3}^r$ be the $1,3$ two-dimensional marginal of $\bm\pi^r:=\bm\pi\bm P_A^r$ when the probability parameters are $1$, $p_1$, $p_1$, and $0$.  Here $\bm\pi$ is the unique stationary distribution of $\bm P_A^r\bm P_B^s$.  We apply Theorem~\ref{SLLN-thm} twice.  By (\ref{mu4}) and (\ref{1,3 symm}),
\begin{eqnarray*}
\mu_{[r,1]}^N&=&{N[1-(1-1/N)^{r+1}]\over(r+1)(1-1/N)^r}\{\pi_{1,3}^r(1,0)+\pi_{1,3}^r(1,1)-[\pi_{1,3}^r(0,0)+\pi_{1,3}^r(0,1)]\}\\
&=&{N[1-(1-1/N)^{r+1}]\over(r+1)(1-1/N)^r}[\pi_{1,3}^r(1,1)-\pi_{1,3}^r(0,0)].\nonumber
\end{eqnarray*}
By (\ref{mu2}), (\ref{1,3 symm}), and the preceding formula,
\begin{eqnarray*}
\mu_{[r,1]}^N&=&(r+1)^{-1}[\pi_{1,3}^r(0,0)(1)+2\pi_{1,3}^r(0,1)(2p_1-1)+\pi_{1,3}^r(1,1)(-1)]\\
&=&{2(2p_1-1)\over r+1}\pi_{1,3}^r(0,1)-{(1-1/N)^r\over N[1-(1-1/N)^{r+1}]}\mu_{[r,1]}^N.
\end{eqnarray*}
Therefore, 
$$
\mu_{[r,1]}^N={2(2p_1-1)\over r+1}\bigg(1+{(1-1/N)^r\over N[1-(1-1/N)^{r+1}]}\bigg)^{-1}\pi_{1,3}^r(0,1), 
$$
and this is positive by the irreducibility of the Markov chain with transition matrix $\bm P_r:=\bm P_B\bm P_A^r$ (Lemma \ref{ergodic}) and the assumption that $p_1>1/2$.
\end{proof}

\section{A spin system}\label{spin}

As shown in \cite{EL12c}, the discrete-time Markov chain for game $B$ converges in distribution, after rescaling its time parameter, to a \textit{spin system} on the one-dimensional integer lattice ${\bf Z}$.  Let us recall the limiting process as described by its generator.  Its state space is the product space
\begin{equation*}
\{0,1\}^{\bf Z}:=\{\bm x=(\ldots,x_{-2},x_{-1},x_0,x_1,x_2,\ldots): x_i\in\{0,1\}{\rm\ for\ all\ }i\in{\bf Z}\}.
\end{equation*}
We will usually refer to $x_i$ as the status (loser or winner, 0 or 1) of player $i$; occasionally, it will be convenient to refer to it as the spin at site $i$.  Let $m_i(\bm x):=2x_{i-1}+x_{i+1}$ as before but without the boundary conditions.  Also, let $\bm x^i$ be the element of $\{0,1\}^{\bf Z}$ equal to $\bm x$ except at the $i$th component;  for example, $\bm x^0:=(\ldots,x_{-2},x_{-1},1-x_0,x_1,x_2,\ldots)$.

The generator depends on the four probability parameters $p_0,p_1,p_2,p_3\in[0,1]$, and it has the form
\begin{eqnarray}\label{L_B}
(\mathscr{L}_Bf)(\bm x)&:=&\sum_{i\in{\bf Z}}c_i(\bm x)[f(\bm x^i)-f(\bm x)]
\end{eqnarray}
for functions $f$ depending on only finitely many components, where the \textit{flip rates} are given by
\begin{equation}\label{rates}
c_i(\bm x):=\begin{cases}p_{m_i(\bm x)}&\text{if $x_i=0$,}\\q_{m_i(\bm x)}&\text{if $x_i=1$,}
\end{cases}
\end{equation}
and $q_m:=1-p_m$ for $m=0,1,2,3$.  It can be shown that the functions depending on only finitely many components form a core for the generator of the Feller semigroup associated with the process.

For later use let us also define
\begin{equation}\label{L_A}
(\mathscr{L}_Af)(\bm x):=\sum_{i\in{\bf Z}}{1\over2}[f(\bm x^i)-f(\bm x)],
\end{equation}
which is just the special case of (\ref{L_B}) with $p_0=p_1=p_2=p_3=1/2$.

Next we would like to clarify the statement that this spin system is the limit in distribution of the $N$-player chain for game $B$ after an appropriate time change.  First, it is convenient to relabel the $N$ players.  Instead of labeling them from 1 to $N$, we label them  from $l_N$ to $r_N$, where
$$
l_N:=\begin{cases}-(N-1)/2&\text{if $N$ is odd,}\\
-N/2&\text{if $N$ is even,}
\end{cases}\;\;\text{and}\;\;
r_N:=\begin{cases}(N-1)/2&\text{if $N$ is odd,}\\
N/2-1&\text{if $N$ is even,}\\
\end{cases}
$$
with the understanding that players $l_N$ and $r_N$ are nearest neighbors.  The state space is
\begin{equation*}
\Sigma_N:=\{\bm x=(x_{l_N},\ldots,x_{-1},x_0,x_1,\ldots,x_{r_N}): x_i\in\{0,1\}{\rm\ for\ }i=l_N,\ldots,r_N\}.
\end{equation*}
(This is what we previously called $\Sigma$ but with the players relabeled.  To avoid confusion, we make the dependence on $N$ explicit in the notation.)  We also speed up time in the $N$-player model so that $N$ one-step transitions occur per unit of time.  The resulting discrete generator has the form
\begin{eqnarray*}
(\mathscr{L}_B^Nf)(\bm x)&:=&N\E[f(\bm X_N(1))-f(\bm x)\mid \bm X_N(0)=\bm x]
\end{eqnarray*}
where $x_{l_N-1}:=x_{r_N}$ and $x_{r_N+1}:=x_{l_N}$.
Consequently, if we define $\zeta_N:\Sigma_N\mapsto\{0,1\}^{\bf Z}$ by
\begin{equation}\label{zeta}
\zeta_N(x_{l_N},\ldots,x_{r_N}):=(\ldots,0,0,x_{l_N},\ldots,x_{r_N},0,0,\ldots),
\end{equation}
then $\mathscr{L}_B^N(f\circ\zeta_N)=(\mathscr{L}_Bf)\circ\zeta_N$ for all $\bm x\in\Sigma_N$ and $N\ge2K+4$, where $f(\bm x)$ depends on $\bm x$ only through the $2K+1$ components $x_i$, $-K\le i\le K$.

This shows that, if the spin system has a unique stationary distribution, then the unique stationary distribution of the $N$-player Markov chain (assumed ergodic in the sense of Lemma~1 of \cite{EL12c}), converges to it in the topology of weak convergence (essentially Proposition I.2.14 of Liggett \cite{L85}).  Let us assume that the spin system has a unique stationary distribution $\pi$, and let us denote the unique stationary distribution of the $N$-player Markov chain by $\pi^N$.  (We previously denoted the latter by $\bm\pi$ but now it is necessary to make the dependence on $N$ explicit.  We do not use boldface for $\pi^N$ or $\pi$ because it is no longer useful or possible, respectively, to think of them as row vectors.)  The above argument shows that $\pi^N\zeta_N^{-1}\Rightarrow\pi$.  Let us denote their $-1,1$ two-dimensional marginals by $(\pi^N)_{-1,1}$ and $\pi_{-1,1}$, so that $(\pi^N)_{-1,1}\Rightarrow\pi_{-1,1}$ and
\begin{equation*}
\sum_{w=0}^1\sum_{z=0}^1(\pi^N)_{-1,1}(w,z)p_{2w+z}\to\sum_{w=0}^1\sum_{z=0}^1\pi_{-1,1}(w,z)p_{2w+z}.
\end{equation*}
Hence $\mu_B^N$, the mean profit per turn to the ensemble of $N$ players always playing game $B$, converges as $N\to\infty$ to a limit that can be expressed in terms of the spin system. 

Under what conditions does the spin system have a unique stationary distribution (equivalently, a unique invariant probability measure)?  In \cite{EL12c} we gave sufficient conditions for the spin system to be \textit{ergodic}, which means not only that there is a unique stationary distribution $\pi$ but that the process at time $t$ converges in distribution to $\pi$ as $t\to\infty$, regardless of the initial distribution.

\begin{theorem}\label{ergodicity}
With $p_0,p_1,p_2,p_3\in[0,1]$, the spin system on ${\bf Z}$ with flip rates (\ref{rates}) is ergodic if at least one of the following four conditions is satisfied:

\emph{(a)} (basic estimate applies)
\begin{equation*}
\max(|p_0-p_1|,|p_2-p_3|)+\max(|p_0-p_2|,|p_1-p_3|)<1;
\end{equation*}

\emph{(b)} (attractiveness or repulsiveness applies)
\begin{equation*}
0<\min(p_0,p_3)\le \min(p_1,p_2)\le\max(p_1,p_2)\le \max(p_0,p_3)<1;
\end{equation*}

\emph{(c)} (coalescing duality applies)
\begin{equation*}
\;\;\max(p_1,p_2,p_3,p_1+p_2-p_3)-p_3<p_0/2<\min(p_1,p_2,p_3,p_1+p_2-p_3);
\end{equation*}

\emph{(d)} (annihilating duality applies) 
\begin{equation*}
p_0,p_1,p_2,p_3\in(2\overline{p}-1,2\overline{p})\cap(0,1),\quad \overline{p}:=(p_0+p_1+p_2+p_3)/4.
\end{equation*}
\end{theorem}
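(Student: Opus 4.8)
The plan is to dispose of the four hypotheses one at a time, since each is precisely the input required by one of the four classical routes to ergodicity of a spin system (Liggett \cite{L85}); the proof then reduces, in each case, to checking that the flip rates (\ref{rates}) satisfy the relevant criterion. For (a) I would invoke the basic perturbation estimate: writing $\gamma(i,j):=\sup_{\bm x}|c_i(\bm x)-c_i(\bm x^j)|$ for $j\ne i$ and $\varepsilon:=\inf_{\bm x,i}[c_i(\bm x)+c_i(\bm x^i)]$, the system is ergodic (in fact exponentially so) whenever $M:=\sup_i\sum_{j\ne i}\gamma(i,j)<\varepsilon$. From (\ref{rates}) one has $c_i(\bm x)+c_i(\bm x^i)=p_{m_i(\bm x)}+q_{m_i(\bm x)}=1$ for every $\bm x$, so $\varepsilon=1$; flipping site $i-1$ alters $c_i(\bm x)$ by $|p_0-p_2|$ or $|p_1-p_3|$ according to the value of $x_{i+1}$, flipping site $i+1$ alters it by $|p_0-p_1|$ or $|p_2-p_3|$ according to the value of $x_{i-1}$, and $\gamma(i,j)=0$ for $|i-j|\ge2$. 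Hence $M$ coincides with the left-hand side of (a), and (a) is exactly the statement $M<1=\varepsilon$.

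For (b), when $p_0\le p_3$ the inequalities force $p_0\le p_1\le p_3$ and $p_0\le p_2\le p_3$, which makes $c_i(\bm x)$ nondecreasing in the neighbor spins when $x_i=0$ and nonincreasing when $x_i=1$: the system is attractive. When $p_3\le p_0$ the reversed inequalities make it repulsive. In the attractive case there are extremal invariant measures $\underline\nu\le\bar\nu$, the limits obtained from $\delta_{\bm 0}$ and $\delta_{\bm 1}$, and ergodicity is equivalent to $\underline\nu=\bar\nu$; I would establish this via the basic coupling together with the one-dimensional structure, the hypotheses $\min(p_0,p_3)>0$ and $\max(p_0,p_3)<1$ being exactly what is needed to keep the process from getting trapped (as happens, e.g., for the voter model, where they fail) and to force the density of discrepancies in the stationary coupling to vanish. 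The repulsive case is handled identically after replacing the basic coupling by the one in which a coordinate is reflected. (This is the argument carried out in \cite{EL12c}.)

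Cases (c) and (d) are the duality cases. A translation-invariant nearest-neighbor spin system on ${\bf Z}$ admits a dual consisting of nearest-neighbor random walks that coalesce (case (c)) or annihilate (case (d)) on contact precisely when a suitable linear reparametrization of $(p_0,p_1,p_2,p_3)$ leaves all the rates of the dual particle system nonnegative, and I would verify by direct computation that this nonnegativity is equivalent to the inequalities displayed in (c) (resp.\ in (d)), the quantity $p_0/2$ (resp.\ the thresholds $2\bar p-1$ and $2\bar p$) entering as the natural bookkeeping parameter. Duality then equates, for each finite $A\subset{\bf Z}$, a suitable functional of $X_t$ read off on $A$ with an expectation over the dual process started from $A$; since one-dimensional systems of coalescing (resp.\ annihilating) random walks are recurrent --- any two particles meet --- a finite dual configuration eventually stops changing in the coalescing case and is ultimately governed by the parity of $|A|$ in the annihilating case, so this expectation converges as $t\to\infty$ to a limit independent of the initial law. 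Because functions of finitely many coordinates are dense, this yields uniqueness of the invariant measure together with convergence to it, i.e.\ ergodicity.

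I expect (a) to be pure bookkeeping and (b) to be the standard one-dimensional attractive-system argument, with the non-degeneracy hypotheses doing exactly the work they are designed for. The real obstacle is (c) and (d): one must first identify the correct dual particle system and then push through the algebra showing that the not-obviously-natural inequalities in the theorem are precisely the conditions for the dual rates to be nonnegative; once that matching is done, the recurrence of one-dimensional coalescing walks and the parity behavior of one-dimensional annihilating walks close the argument with essentially no further effort.
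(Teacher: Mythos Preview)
The present paper does not prove this theorem at all: it is quoted from the companion paper \cite{EL12c}, with the only commentary being ``See \cite{EL12c} for further discussion.'' So there is no proof here to compare against. That said, the parenthetical tags in the statement itself (``basic estimate applies'', ``attractiveness or repulsiveness applies'', ``coalescing duality applies'', ``annihilating duality applies'') make it clear that your four-case outline is exactly the intended route, and indeed you yourself cite \cite{EL12c} for case (b). Your verification of case (a) --- that $\varepsilon=c_i(\bm x)+c_i(\bm x^i)=p_m+q_m=1$ identically, and that $M=\gamma(i,i-1)+\gamma(i,i+1)$ reduces to the displayed left-hand side --- is correct and essentially complete. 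Cases (b)--(d) remain at the level of a program rather than a proof, as you acknowledge; the substantive work (the coupling argument for (b) and the explicit construction of the duals and rate-matching algebra for (c) and (d)) lives in \cite{EL12c}.
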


See \cite{EL12c} for further discussion.  The following result is immediate.

\begin{theorem}\label{lim mu^N}
Assume that $(p_0,p_1,p_2,p_3)$ is such that we can define $\mu_B^N=\mu_B^N(p_0,p_1,p_2,p_3)$ for all $N\ge3$ (this requires that the conditions for ergodicity in Lemma 1 of \cite{EL12c} are satisfied).  Assume also that the spin system on ${\bf Z}$ with flip rates (\ref{rates}) is ergodic (see Theorem~\ref{ergodicity} for sufficient conditions) with unique stationary distribution $\pi$.  Then $\lim_{N\to\infty}\mu_B^N=\mu_B$, where
$$
\mu_B:=\sum_{w=0}^1\sum_{z=0}^1\pi_{-1,1}(w,z)(p_{2w+z}-q_{2w+z}).
$$

Let $0<\gamma<1$ and assume that $(p_0,p_1,p_2,p_3)$ is such that we can define 
$$
\mu_{(\gamma,1-\gamma)}^N:=\mu_B^N(p_0(\gamma),p_1(\gamma),p_2(\gamma),p_3(\gamma)), 
$$
for all $N\ge3$, where
\begin{equation}\label{p_m()}
p_m(\gamma):=\gamma(1/2)+(1-\gamma)p_m,\qquad m=0,1,2,3,
\end{equation}
and that the spin system on ${\bf Z}$ with flip rates of the form (\ref{rates}) but with $(p_0,p_1,p_2,\linebreak p_3)$ replaced by $(p_0(\gamma),p_1(\gamma),p_2(\gamma),p_3(\gamma))$ is ergodic with unique stationary distribution $\pi^\gamma$.  Then $\lim_{N\to\infty}\mu_{(\gamma,1-\gamma)}^N=\mu_{(\gamma,1-\gamma)}$, where
\begin{equation}\label{mu_()}
\mu_{(\gamma,1-\gamma)}:=(1-\gamma)\sum_{w=0}^1\sum_{z=0}^1(\pi^\gamma)_{-1,1}(w,z)(p_{2w+z}-q_{2w+z}).
\end{equation}
\end{theorem}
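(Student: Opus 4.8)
The plan is to reduce both assertions to the weak convergence $\pi^N\zeta_N^{-1}\Rightarrow\pi$ recorded in the discussion preceding Theorem~\ref{ergodicity}, combined with the SLLN for game $B$. First I would recall the game-$B$ analogue of (\ref{mu2}) (established in \cite{EL12b}): writing $\pi^N$ for the unique stationary distribution of $\bm P_B$ on the relabeled state space $\Sigma_N$ and $(\pi^N)_{-1,1}$ for its $-1,1$ two-dimensional marginal, rotation invariance (Lemma~\ref{invariance}) collapses the site average in the SLLN formula to
$$
\mu_B^N=\sum_{w=0}^1\sum_{z=0}^1(\pi^N)_{-1,1}(w,z)(p_{2w+z}-q_{2w+z}).
$$
Since the spin system is assumed ergodic, $\pi^N\zeta_N^{-1}\Rightarrow\pi$, hence $(\pi^N)_{-1,1}\Rightarrow\pi_{-1,1}$; as these are probability measures on the finite set $\{0,1\}^2$, weak convergence here is simply convergence of the four atoms, and passing to the limit termwise in the displayed sum gives $\mu_B^N\to\sum_{w,z}\pi_{-1,1}(w,z)(p_{2w+z}-q_{2w+z})=\mu_B$. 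This makes explicit the limit referred to, but not named, in the discussion preceding Theorem~\ref{ergodicity}.

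For the second assertion I would note that the ensemble of $N$ players playing the randomly mixed game $\gamma A+(1-\gamma)B$ evolves by the one-step transition matrix $\gamma\bm P_A+(1-\gamma)\bm P_B$, which, by the explicit forms of $\bm P_A$ and $\bm P_B$, equals $\bm P_B$ with each $p_m$ replaced by $p_m(\gamma)=\gamma(1/2)+(1-\gamma)p_m$, the $\pm1$ payoffs being unchanged; this is exactly what makes the definition $\mu_{(\gamma,1-\gamma)}^N:=\mu_B^N(p_0(\gamma),\ldots,p_3(\gamma))$ the correct mean profit. Applying the first assertion to the parameter vector $(p_0(\gamma),\ldots,p_3(\gamma))$, whose associated spin system is assumed ergodic with stationary distribution $\pi^\gamma$, yields
$$
\mu_{(\gamma,1-\gamma)}^N\to\sum_{w=0}^1\sum_{z=0}^1(\pi^\gamma)_{-1,1}(w,z)\,(2p_{2w+z}(\gamma)-1).
$$
The proof then closes with the elementary identity $2p_m(\gamma)-1=(1-\gamma)(2p_m-1)=(1-\gamma)(p_m-q_m)$, which rewrites this limit as $(1-\gamma)\sum_{w,z}(\pi^\gamma)_{-1,1}(w,z)(p_{2w+z}-q_{2w+z})=\mu_{(\gamma,1-\gamma)}$.

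I do not expect any real obstacle once $\pi^N\zeta_N^{-1}\Rightarrow\pi$ is granted: that statement carries all of the analysis (it rests on the intertwining $\mathscr{L}_B^N(f\circ\zeta_N)=(\mathscr{L}_Bf)\circ\zeta_N$ for $N$ large and on Feller-semigroup convergence, see Liggett~\cite{L85}), whereas everything downstream is a finite sum over $\{0,1\}^2$. The only care needed is bookkeeping: checking that the stated hypotheses make $\mu_B^N$ and $\mu_{(\gamma,1-\gamma)}^N$ well defined for every $N\ge3$ (ergodicity of the finite chains, Lemma~1 of \cite{EL12c}) and that $\pi$ and $\pi^\gamma$ are the \emph{unique} stationary distributions, so that the limits are unambiguous. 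This is why the result can be called ``immediate.''
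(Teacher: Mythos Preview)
Your proposal is correct and is precisely the argument the paper has in mind when it calls the result ``immediate'': the paragraph preceding Theorem~\ref{ergodicity} already records $(\pi^N)_{-1,1}\Rightarrow\pi_{-1,1}$ and the convergence of the corresponding sums, and the second part is just the first applied to the parameter vector $(p_0(\gamma),\ldots,p_3(\gamma))$ together with the identity $2p_m(\gamma)-1=(1-\gamma)(p_m-q_m)$. The paper gives no separate proof.
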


We notice that condition (a) of Theorem \ref{ergodicity} holds with $(p_0,p_1,p_2,p_3)$ replaced by $(p_0(\gamma),p_1(\gamma),p_2(\gamma),p_3(\gamma))$ if
$$
\max(|p_0-p_1|,|p_2-p_3|)+\max(|p_0-p_2|,|p_1-p_3|)<1/(1-\gamma);
$$
this is automatic if $\gamma>1/2$.

The special case of Theorem \ref{lim mu^N} in which $\gamma=1/2$ was included in \cite{EL12c}.

\section{Convergence of means}\label{limit}

We turn to our main result, namely that $\lim_{N\to\infty}\mu_{[r,s]}^N$ exists under certain conditions.

\begin{theorem}
Fix $r,s\ge1$ and put $\gamma:=r/(r+s)$.  Assume that the spin system on ${\bf Z}$ with flip rates of the form (\ref{rates}) but with $(p_0,p_1,p_2,p_3)$ replaced by $(p_0(\gamma),p_1(\gamma),p_2(\gamma),p_3(\gamma))$ (see (\ref{p_m()})) is ergodic with unique stationary distribution $\pi^\gamma$.  Then $\lim_{N\to\infty}\mu_{[r,s]}^N=\mu_{(\gamma,1-\gamma)}$, where $\mu_{(\gamma,1-\gamma)}$ is as in (\ref{mu_()}).
\end{theorem}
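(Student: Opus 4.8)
The plan is to show that, as $N\to\infty$, the $N$-player chain governed by the periodic pattern $A^rB^s$ is, to leading order in $N^{-1}$, the same process as the $N$-player chain governed by the randomly mixed game $C_\gamma:=\gamma A+(1-\gamma)B$, so that the convergence already established for random mixtures (Theorem~\ref{lim mu^N}) transfers to the pattern case. Concretely, write $\pi^N$ for the unique stationary distribution of the pattern chain $\bm P_A^r\bm P_B^s$ (ergodic by Lemma~\ref{ergodic}; this is the $\bm\pi$ of Theorem~\ref{SLLN-thm}, with the dependence on $N$ now displayed). By formula (\ref{mu2}),
\[
\mu_{[r,s]}^N=\frac{1}{r+s}\sum_{v=0}^{s-1}\sum_{w=0}^1\sum_{z=0}^1[\pi^N\bm P_A^r\bm P_B^v]_{1,3}(w,z)\,(p_{2w+z}-q_{2w+z}),
\]
so it suffices to prove that $[\pi^N\bm P_A^r\bm P_B^v]_{1,3}(w,z)\to(\pi^\gamma)_{-1,1}(w,z)$ as $N\to\infty$ for each fixed $v\in\{0,1,\dots,s-1\}$ and $w,z\in\{0,1\}$; the sum over $v$ then contributes a factor $s$, and since $s/(r+s)=1-\gamma$ this yields precisely $\mu_{(\gamma,1-\gamma)}$ as in (\ref{mu_()}).

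The crux is the weak convergence $\pi^N\zeta_N^{-1}\Rightarrow\pi^\gamma$, where $\zeta_N$ is the embedding (\ref{zeta}) and $\pi^\gamma$ is the stationary distribution of the spin system with parameters $p_m(\gamma)$ from (\ref{p_m()}); the argument parallels the one for game $B$ recorded in Section~\ref{spin} (following \cite{EL12c}). Put $\mathscr{L}_A^N:=N(\bm P_A-I)$ and $\mathscr{L}_B^N:=N(\bm P_B-I)$, acting on functions on $\Sigma_N$, so that the intertwining relations $\mathscr{L}_A^N(g\circ\zeta_N)=(\mathscr{L}_Ag)\circ\zeta_N$ and $\mathscr{L}_B^N(g\circ\zeta_N)=(\mathscr{L}_Bg)\circ\zeta_N$ hold for every finitely supported $g$ once $N$ is large (depending on the support of $g$). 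Expanding $\bm P_A^r\bm P_B^s-I=(I+N^{-1}\mathscr{L}_A^N)^r(I+N^{-1}\mathscr{L}_B^N)^s-I$ in powers of $N^{-1}$, the rescaled generator $\bm L_N:=\tfrac{N}{r+s}(\bm P_A^r\bm P_B^s-I)$ has leading term $\tfrac{r}{r+s}\mathscr{L}_A^N+\tfrac{s}{r+s}\mathscr{L}_B^N$ and a remainder which, applied to $f\circ\zeta_N$ for a fixed finitely supported $f$, is $O(N^{-1})$ in sup norm (by iterating the intertwining relations, which keep every error term bounded). Since $\gamma\mathscr{L}_A+(1-\gamma)\mathscr{L}_B$ is exactly the spin-system generator $\mathscr{L}_{C_\gamma}$ with parameters $p_m(\gamma)$ — the flip rate $\gamma(1/2)+(1-\gamma)p_m$ equals $p_m(\gamma)$ and $\gamma(1/2)+(1-\gamma)q_m$ equals $1-p_m(\gamma)$ — we obtain $\bm L_N(f\circ\zeta_N)\to(\mathscr{L}_{C_\gamma}f)\circ\zeta_N$ uniformly, for every finitely supported $f$. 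Now $\pi^N\bm L_N=0$ by stationarity, hence $\int\mathscr{L}_{C_\gamma}f\,d(\pi^N\zeta_N^{-1})\to0$. By compactness of $\{0,1\}^{\bf Z}$ the family $\{\pi^N\zeta_N^{-1}\}$ is tight, and any subsequential weak limit $\nu$ satisfies $\int\mathscr{L}_{C_\gamma}f\,d\nu=0$ for all finitely supported $f$; since these functions form a core, $\nu$ is stationary for the spin system, hence $\nu=\pi^\gamma$ by the assumed ergodicity. Therefore $\pi^N\zeta_N^{-1}\Rightarrow\pi^\gamma$.

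To finish, note that for each fixed $v\le s-1$ the operator $\bm P_A^r\bm P_B^v-I$, applied to $f\circ\zeta_N$ for finitely supported $f$, is $O(N^{-1})$ in sup norm (the same telescoping expansion, using $(\bm P_B-I)(g\circ\zeta_N)=N^{-1}(\mathscr{L}_Bg)\circ\zeta_N$ and its analogue for $\bm P_A$); consequently $\pi^N\bm P_A^r\bm P_B^v\zeta_N^{-1}$ has the same weak limit $\pi^\gamma$ as $\pi^N\zeta_N^{-1}$. The measure $\pi^N\bm P_A^r\bm P_B^v$ is invariant under rotations of the players (Lemma~\ref{invariance}, as observed in the discussion following it), so its $1,3$ two-dimensional marginal equals its marginal at sites $-1$ and $1$; weak convergence then forces $[\pi^N\bm P_A^r\bm P_B^v]_{1,3}(w,z)\to(\pi^\gamma)_{-1,1}(w,z)$. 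Substituting into (\ref{mu2}) and using $s/(r+s)=1-\gamma$ gives $\mu_{[r,s]}^N\to(1-\gamma)\sum_{w,z}(\pi^\gamma)_{-1,1}(w,z)(p_{2w+z}-q_{2w+z})=\mu_{(\gamma,1-\gamma)}$.

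I expect the main obstacle to be the bookkeeping in the generator expansion — verifying that $\bm L_N(f\circ\zeta_N)-(\mathscr{L}_{C_\gamma}f)\circ\zeta_N$ is genuinely $O(N^{-1})$ uniformly, which requires the intertwining identities to survive repeated application of $\mathscr{L}_A^N$ and $\mathscr{L}_B^N$ (valid only once $N$ exceeds a threshold depending on $r+s$ and the support of $f$) — together with the careful, though standard, deduction of convergence of stationary distributions from convergence of generators on a core. Everything else reduces to invoking Theorem~\ref{SLLN-thm}, Lemma~\ref{ergodic}, Lemma~\ref{invariance}, and the spin-system construction of Section~\ref{spin}.
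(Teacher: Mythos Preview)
Your proposal is correct and follows essentially the same route as the paper: prove that the rescaled discrete generator of $\bm P_A^r\bm P_B^s$ converges uniformly on finitely supported functions to $(r+s)^{-1}(r\mathscr{L}_A+s\mathscr{L}_B)=\mathscr{L}_{C_\gamma}$, deduce weak convergence of stationary distributions to $\pi^\gamma$, and then pass to the limit in formula~(\ref{mu2}). The paper carries out the generator convergence by explicitly expanding the transition kernel $[\bm P_A^r\bm P_B^s](\bm x_0,\bm x_{r+s})$ over subsets $A\subset\{1,\dots,r\}$ and $B\subset\{r+1,\dots,r+s\}$ and then sorting sites by whether $|i|\le K$ or $|i|>K$; your expansion of $(I+N^{-1}\mathscr{L}_A^N)^r(I+N^{-1}\mathscr{L}_B^N)^s$ together with the exact intertwining $\mathscr{L}_B^N(g\circ\zeta_N)=(\mathscr{L}_Bg)\circ\zeta_N$ is the same computation organized operator-theoretically, and your observation that iterated application only enlarges the support by $O(r+s)$ sites is exactly what is needed to keep the remainder bounded independently of $N$. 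For the final step the paper again expands $[\bm P_A^r\bm P_B^v]$ explicitly to reduce term $v$ of (\ref{mu1}) to $\sum_{w,z}(\pi^N)_{-1,1}(w,z)(p_{2w+z}-q_{2w+z})+O(N^{-1})$, whereas you obtain the same reduction more cheaply by noting $(\bm P_A^r\bm P_B^v-I)(f\circ\zeta_N)=O(N^{-1})$ so that $\pi^N\bm P_A^r\bm P_B^v\zeta_N^{-1}$ and $\pi^N\zeta_N^{-1}$ share the weak limit $\pi^\gamma$; both arguments are valid and yield the same conclusion.
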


\begin{proof}
Define $\zeta_N:\Sigma_N\mapsto\{0,1\}^{\bf Z}$ by (\ref{zeta}).  The main step is to show that the discrete generator $\mathscr{L}_{[r,s]}^N$, corresponding to the nonrandom pattern $[r,s]$ (and with $N$ games played per unit of time), satisfies
\begin{equation}\label{convergence-generators}
\mathscr{L}_{[r,s]}^N(f\circ\zeta_N)=[(r+s)^{-1}(r\mathscr{L}_A+s\mathscr{L}_B)f]\circ\zeta_N+O(N^{-1}),
\end{equation}
uniformly over $\Sigma_N$, for all $f$ depending on only finitely many components, where
where $\mathscr{L}_A$ and $\mathscr{L}_B$ are as in (\ref{L_A}) and (\ref{L_B}).
Because the result is nonintuitive and the proof is technical, we treat the case $r=s=1$ first.  We hope this slight redundancy will improve clarity.

In the case $r=s=1$, the discrete generator has the form
$$
(\mathscr{L}_{[1,1]}^Nf)(\bm x):={N\over2}\sum_{\bm z}[f(\bm z)-f(\bm x)](\bm P_A\bm P_B)(\bm x,\bm z).
$$
To evaluate this, we will need
\begin{eqnarray*}
P_A(\bm x,\bm y)&=&{1\over2}\delta(\bm x,\bm y)+{1\over2N}\sum_i\delta(\bm x^i,\bm y),\\
P_B(\bm y,\bm z)&=&{1\over N}\sum_j[1-c_j(\bm y)]\delta(\bm y,\bm z)+{1\over N}\sum_j c_j(\bm y)\delta(\bm y^j,\bm z),
\end{eqnarray*}
where $\delta(\bm x,\bm y)$ is the Kronecker delta, which equals 1 if $\bm x=\bm y$ and equals 0 otherwise; the sums over $i$ and $j$ range over $\{l_N,\ldots,r_N\}$; and $c_j(\bm y)$ is as in (\ref{rates}).  This tells us that
\begin{eqnarray*}
(\bm P_A\bm P_B)(\bm x,\bm z)&=&\sum_{\bm y}P_A(\bm x,\bm y)P_B(\bm y,\bm z)\\
&=&{1\over2N}\sum_j[1-c_j(\bm x)]\delta(\bm x,\bm z)+{1\over2N^2}\sum_i\sum_j[1-c_j(\bm x^i)]\delta(\bm x^i,\bm z)\\
&&\quad{}+{1\over2N}\sum_j c_j(\bm x)\delta(\bm x^j,\bm z)+{1\over2N^2}\sum_i\sum_j c_j(\bm x^i)\delta(\bm x^{ij},\bm z),
\end{eqnarray*}
where $\bm x^{ij}:=(\bm x^i)^j=(\bm x^j)^i$, so our discrete generator reduces to
\begin{eqnarray*}
(\mathscr{L}_{[1,1]}^Nf)(\bm x)&=&{1\over4N}\sum_i\sum_j[1-c_j(\bm x^i)][f(\bm x^i)-f(\bm x)]\\
&&\quad{}+{1\over4}\sum_jc_j(\bm x)[f(\bm x^j)-f(\bm x)]\\
&&\quad{}+{1\over4N}\sum_i\sum_jc_j(\bm x^i)[f(\bm x^{ij})-f(\bm x)].
\end{eqnarray*}

Now let us restrict attention to those functions $f$ that depend on only the coordinates $x_{-(K-1)},\ldots,x_{K-1}$ for some positive integer $K$.  Then we can write the discrete generator as
\begin{eqnarray*}
(\mathscr{L}_{[1,1]}^Nf)(\bm x)&=&{1\over4N}\sum_{|i|\le K}\sum_{|j|\le K}[1-c_j(\bm x^i)][f(\bm x^i)-f(\bm x)]\\
&&\quad{}+{1\over4N}\sum_{|i|\le K}\sum_{|j|>K}[1-c_j(\bm x^i)][f(\bm x^i)-f(\bm x)]\\
&&\quad{}+{1\over4}\sum_{|j|\le K}c_j(\bm x)[f(\bm x^j)-f(\bm x)]\\
&&\quad{}+{1\over4N}\sum_{|i|\le K}\sum_{|j|\le K}c_j(\bm x^i)[f(\bm x^{ij})-f(\bm x)]\\
&&\quad{}+{1\over4N}\sum_{|i|\le K}\sum_{|j|>K}c_j(\bm x^i)[f(\bm x^i)-f(\bm x)]\\
&&\quad{}+{1\over4N}\sum_{|i|>K}\sum_{|j|\le K}c_j(\bm x^i)[f(\bm x^j)-f(\bm x)].
\end{eqnarray*}
Let us refer to the six terms of the expression on the right as terms 1--6.  Terms 2 and 5 combine to give
\begin{eqnarray*}
{1\over4N}\sum_{|i|\le K}\sum_{|j|>K}[f(\bm x^i)-f(\bm x)]&=&{1\over4}\sum_{|i|\le K}{N-(2K+1)\over N}[f(\bm x^i)-f(\bm x)]\\
&=&{1\over4}\sum_{|i|\le K}[f(\bm x^i)-f(\bm x)]+O(N^{-1}).
\end{eqnarray*}
Term 3 is constant and term 6 simplifies to
\begin{eqnarray*}
{1\over4N}\sum_{|i|>K}\sum_{|j|\le K}c_j(\bm x^i)[f(\bm x^j)-f(\bm x)]&=&{1\over4N}\sum_{|i|>K}\sum_{|j|\le K}c_j(\bm x)[f(\bm x^j)-f(\bm x)]\\
&=&{N-(2K+1)\over4N}\sum_{|j|\le K}c_j(\bm x)[f(\bm x^j)-f(\bm x)]\\
&=&{1\over4}\sum_{|j|\le K}c_j(\bm x)[f(\bm x^j)-f(\bm x)]+O(N^{-1})
\end{eqnarray*}
because $c_j(\bm x^i)=c_j(\bm x)$ if $|i|>K$ and $|j|\le K$, with a possible exception when $|i|=K+1$, $|j|=K$, and $i$ and $j$ have the same sign, in which case $f(\bm x^j)-f(\bm x)=0$.  Finally, terms 1 and 4 are $O(N^{-1})$, so we conclude that
\begin{eqnarray*}
(\mathscr{L}_{[1,1]}^Nf)(\bm x)&=&\sum_{|j|\le K}\bigg({1\over4}+{1\over2}c_j(\bm x)\bigg)[f(\bm x^j)-f(\bm x)]+O(N^{-1})\\
&=&\sum_j\bigg({1\over4}+{1\over2}c_j(\bm x)\bigg)[f(\bm x^j)-f(\bm x)]+O(N^{-1})\\
&=&{1\over2}(\mathscr{L}_Af+\mathscr{L}_Bf)(\zeta_N(\bm x))+O(N^{-1}),
\end{eqnarray*}
which leads to (\ref{convergence-generators}) with $r=s=1$.

The general case should now be easier to follow.\footnote{By convention, $\prod_{i=1}^n a_ib$ equals $(a_1a_2\cdots a_n)b$, not $(a_1a_2\cdots a_n)b^n$.}  Given $r,s\ge1$, we evaluate
\begin{eqnarray*}
&&[\bm P_A^r\bm P_B^s](\bm x_0,\bm x_{r+s})\\
&&\;{}=\sum_{\bm x_1,\ldots,\bm x_{r+s-1}}\prod_{u=1}^rP_A(\bm x_{u-1},\bm x_u)\prod_{u=r+1}^{r+s}P_B(\bm x_{u-1},\bm x_u)\\
&&\;{}=\sum_{\bm x_1,\ldots,\bm x_{r+s-1}}\prod_{u=1}^r\bigg[{1\over2}\delta(\bm x_{u-1},\bm x_u)+{1\over2N}\sum_{i_u}\delta(\bm x_{u-1}^{i_u},\bm x_u)\bigg]\\
&&\quad\;\;{}\cdot\prod_{u=r+1}^{r+s}\bigg[{1\over N}\sum_{i_u}[1-c_{i_u}(\bm x_{u-1})]\delta(\bm x_{u-1},\bm x_u)+{1\over N}\sum_{i_u}c_{i_u}(\bm x_{u-1})\delta(\bm x_{u-1}^{i_u},\bm x_u)\bigg]\\
&&\;{}={1\over2^r}\sum_{A\subset\{1,\ldots,r\}}\sum_{B\subset\{r+1,\ldots,r+s\}}\sum_{\bm x_1,\ldots,\bm x_{r+s-1}}\prod_{u\in A^c}\delta(\bm x_{u-1},\bm x_u)\\
&&\quad\;\;{}\cdot\prod_{u\in A}\bigg[{1\over N}\sum_{i_u}\delta(\bm x_{u-1}^{i_u},\bm x_u)\bigg]\prod_{u\in B^c}\bigg[{1\over N}\sum_{i_u}[1-c_{i_u}(\bm x_{u-1})]\delta(\bm x_{u-1},\bm x_u)\bigg]\\
&&\quad\;\;{}\cdot\prod_{u\in B}\bigg[{1\over N}\sum_{i_u}c_{i_u}(\bm x_{u-1})\delta(\bm x_{u-1}^{i_u},\bm x_u)\bigg]\\
&&\;{}={1\over2^r}\sum_{A\subset\{1,\ldots,r\}}{1\over N^{|A|+s}}\sum_{B\subset\{r+1,\ldots,r+s\}}\sum_{\bm x_1,\ldots,\bm x_{r+s-1}}\\
&&\quad\;\;{}\cdot\sum_{i_u:u\in A}\sum_{i_u:u\in B^c}\sum_{i_u:u\in B}\prod_{v\in B^c}[1-c_{i_v}(\bm x_{v-1})]\prod_{v\in B}c_{i_v}(\bm x_{v-1})\\
&&\quad\;\;{}\cdot\prod_{v\in A^c\cup B^c}\delta(\bm x_{v-1},\bm x_v)\prod_{v\in A\cup B}\delta(\bm x_{v-1}^{i_v},\bm x_v)\\
&&\;{}={1\over2^r}\sum_{A\subset\{1,\ldots,r\}}{1\over N^{|A|+s}}\sum_{B\subset\{r+1,\ldots,r+s\}}\sum_{i_u:u\in A\cup \{r+1,\ldots,r+s\}}\\
&&\quad\;\;{}\cdot\prod_{v\in B^c}[1-c_{i_v}(\bm x_0^{\{i_w:w\in A\cup B, w<v\}})]\prod_{v\in B}c_{i_v}(\bm x_0^{\{i_w:w\in A\cup B, w<v\}})\\
&&\quad\;\;{}\cdot\delta(\bm x_0^{\{i_v:v\in A\cup B\}},\bm x_{r+s});
\end{eqnarray*}
here $A^c:=\{1,\ldots,r\}-A$ and $B^c:=\{r+1,\ldots,r+s\}-B$; also, $\bm x_0^{\{i_v:v\in A\cup B\}}$, for example, denotes $\bm x_0$ with the spin flipped at each site $i_v$ with $v\in A\cup B$;  these site labels are not necessarily distinct, so if there are multiple flips at a single site, only the parity of the number of flips is relevant.

With $f(\bm x)$ depending only on $x_{-(K-1)},\ldots,x_{K-1}$ for some positive integer $K$, this leads to
\begin{eqnarray}\label{Lf}
(\mathscr{L}_{[r,s]}^N f)(\bm x_0)
&=&{N\over r+s}\sum_{\bm x_{r+s}}[f(\bm x_{r+s})-f(\bm x_0)][\bm P_A^r\bm P_B^s](\bm x_0,\bm x_{r+s})\nonumber\\
&=&{N\over r+s}\,{1\over2^r}\sum_{A\subset\{1,\ldots,r\}}{1\over N^{|A|+s}}\sum_{B\subset\{r+1,\ldots,r+s\}}\sum_{i_u:u\in A\cup\{r+1,\ldots,r+s\}}\\
&&\;\;{}\cdot\prod_{v\in B^c}[1-c_{i_v}(\bm x_0^{\{i_w:w\in A\cup B, w<v\}})]\prod_{v\in B}c_{i_v}(\bm x_0^{\{i_w:w\in A\cup B, w<v\}})\nonumber\\
&&\;\;{}\cdot[f(\bm x_0^{\{i_v:v\in A\cup B\}})-f(\bm x_0)].\nonumber
\end{eqnarray}
Now, with error at most $O(N^{-1})$, we can replace $\sum_{i_u:u\in A\cup\{r+1,\ldots,r+s\}}$ by
\begin{eqnarray}\label{A,B}
&&\sum_{u\in A}\sum_{|i_u|\le K}\sum_{|i_z|>K:z\in A\cup\{r+1,\ldots,r+s\},z\ne u}\\
&&\quad{}+\sum_{u\in B}\sum_{|i_u|\le K}\sum_{|i_z|>K:z\in A\cup\{r+1,\ldots,r+s\},z\ne u}.\nonumber
\end{eqnarray}
The justification is that each sum $\sum_{i_u}$ can be written as $\sum_{|i_u|\le K}+\sum_{|i_u|>K}$, resulting in $2^{|A|+s}$ multiple sums.  But each of those multiple sums with two or more sums of the form $\sum_{|i_u|\le K}$ contributes $O(N^{-1})$, and those with no sums of the form $\sum_{|i_u|\le K}$, where $u\in A\cup B$, are 0.  

But before evaluating the result, let us make one more simplification.  We replace the argument of $c_{i_v}$ in (\ref{Lf}) by just $\bm x_0$.  Here the justification is that $c_{i_v}(\bm x_0^{\{i_w:w\in A\cup B, w<v\}})=c_{i_v}(\bm x_0)$ for all but at most $3(r+s)$ of the $N$ possible values of $i_v$ (namely, $i_w-1,i_w,i_w+1$ for $w=1,2,\ldots,r+s$), hence the approximation introduces an error that is $O(N^{-1})$.  The result is that $(\mathscr{L}_{[r,s]}^N f)(\bm x_0)$ can be written as the sum of two terms corresponding to the two multiple sums in (\ref{A,B}), plus $O(N^{-1})$.  

The term corresponding to the first multiple sum in (\ref{A,B}) is, up to $O(N^{-1})$,
\begin{eqnarray}\label{A term}
&&{N\over r+s}\,{1\over2^r}\sum_{A\subset\{1,\ldots,r\}}{1\over N^{|A|+s}}\sum_{B\subset\{r+1,\ldots,r+s\}}\sum_{u\in A}\sum_{|i_u|\le K}[f(\bm x_0^{i_u})-f(\bm x_0)]\\
&&\;{}\cdot\sum_{|i_z|>K:z\in A\cup\{r+1,\ldots,r+s\},z\ne u}\prod_{v\in B^c}[1-c_{i_v}(\bm x_0)]\prod_{v\in B}c_{i_v}(\bm x_0).\nonumber
\end{eqnarray}
Now since
\begin{eqnarray*}
\sum_{B\subset\{r+1,\ldots,r+s\}}\prod_{v\in B^c}[1-c_{i_v}(\bm x_0)]\prod_{v\in B}c_{i_v}(\bm x_0)=\prod_{v=r+1}^{r+s}[1-c_{i_v}(\bm x_0)+c_{i_v}(\bm x_0)]=1
\end{eqnarray*}
and since
$$
{1\over2^r}\sum_{A\subset\{1,\ldots,r\}}|A|=\sum_{k=0}^rk{r\choose k}2^{-r}={r\over2},
$$
(\ref{A term}) becomes, up to $O(N^{-1})$,
$$
{r\over r+s}\sum_{|i|\le K}{1\over2}[f(\bm x_0^i)-f(\bm x_0)]={r\over r+s}(\mathscr{L}_A f)(\zeta_N(\bm x_0)).
$$

The term corresponding to the second multiple sum in (\ref{A,B}) is, up to $O(N^{-1})$,
\begin{eqnarray}\label{B term}
&&{N\over r+s}\,{1\over2^r}\sum_{A\subset\{1,\ldots,r\}}{1\over N^{|A|+s}}\sum_{B\subset\{r+1,\ldots,r+s\}}\sum_{u\in B}\sum_{|i_u|\le K}[f(\bm x_0^{i_u})-f(\bm x_0)]\\
&&\;\;{}\cdot\sum_{|i_z|>K:z\in A\cup\{r+1,\ldots,r+s\},z\ne u}\prod_{v\in B^c}[1-c_{i_v}(\bm x_0)]\prod_{v\in B}c_{i_v}(\bm x_0).\nonumber
\end{eqnarray}
Now
$$
\sum_{B\subset\{r+1,\ldots,r+s\}}\sum_{u\in B}=\sum_{u=r+1}^{r+s}\sum_{B\subset\{r+1,\ldots,r+s\}:u\in B},
$$
so (\ref{B term}) becomes
\begin{eqnarray*}
&&{N\over r+s}\,{1\over2^r}\sum_{A\subset\{1,\ldots,r\}}{1\over N^{|A|+s}}\sum_{u=r+1}^{r+s}\sum_{|i_u|\le K}c_{i_u}(\bm x_0)[f(\bm x_0^{i_u})-f(\bm x_0)]\nonumber\\
&&\;\;{}\cdot\sum_{|i_z|>K:z\in A\cup \{r+1,\ldots,r+s\},z\ne u}\sum_{B\subset\{r+1,\ldots,r+s\}:u\in B}\\
&&\;\;{}\cdot\prod_{v\in B^c}[1-c_{i_v}(\bm x_0)]\prod_{v\in B-\{u\}}c_{i_v}(\bm x_0)\\
&&{}={1\over r+s}\sum_{u=r+1}^{r+s}\sum_{|i_u|\le K}c_{i_u}(\bm x_0)[f(\bm x_0^{i_u})-f(\bm x_0)]+O(N^{-1})\\
&&{}={s\over r+s}(\mathscr{L}_B f)(\zeta_N(\bm x_0))+O(N^{-1}).
\end{eqnarray*}

Let us replace $f$ by $f\circ\zeta_N$, where $f\in C(\{0,1\}^{{\bm Z}})$ and $f(\bm x)$ depends on only the components $x_{-(K-1)},\ldots,x_{K-1}$.
We conclude that (\ref{convergence-generators}) holds,
uniformly over $\Sigma_N$, which ensures that the unique stationary distribution $\bm\pi^N$ of $\bm P_A^r\bm P_B^s$ converges weakly to the unique stationary distribution $\pi^{r/(r+s)}$ of the spin system with generator $\mathscr{L}_B$ but with $(p_0,p_1, p_2,p_3)$ replaced by
$(p_0(\gamma), p_1(\gamma),\linebreak p_2(\gamma),p_3(\gamma))$, where $\gamma:=r/(r+s)$, provided ergodicity holds for the limiting spin system.

The mean profit per turn to the ensemble of $N$ players playing the nonrandom periodic pattern $A^rB^s$ is, according to Theorem \ref{SLLN-thm},
\begin{equation}\label{mean}
\mu_{[r,s]}^N={1\over r+s}\sum_{v=0}^{s-1}\sum_{\bm x\in\Sigma}[\bm\pi^N\bm P_A^r\bm P_B^v](\bm x){1\over N}\sum_{i=1}^N [p_{m_i(\bm x)}-q_{m_i(\bm x)}].
\end{equation}
Now term $v$ of the sum in (\ref{mean}) can be expressed as
\begin{eqnarray*}
&&\sum_{\bm x_0,\bm x}\pi^N(\bm x_0)[\bm P_A^r\bm P_B^v](\bm x_0,\bm x){1\over N}\sum_l[p_{m_l(\bm x)}-q_{m_l(\bm x)}]\\
&=&{1\over2^r}\sum_{\bm x_0}\pi^N(\bm x_0)\sum_{A\subset\{1,\ldots,r\}}{1\over N^{|A|+v}}\sum_{B\subset\{r+1,\ldots,r+v\}}\sum_{i_u:u\in A\cup \{r+1,\ldots,r+v\}}\\
&&\;\;{}\cdot\prod_{w\in B^c}[1-c_{i_w}(\bm x_0^{\{i_z:z\in A\cup B, z<w\}})]\prod_{w\in B}c_{i_w}(\bm x_0^{\{i_z:z\in A\cup B, z<w\}})\\
&&\;\;{}\cdot {1\over N}\sum_l[p_{m_l(\bm x_0^{\{i_w:w\in A\cup B\}})}-q_{m_l(\bm x_0^{\{i_w:w\in A\cup B\}})}]\\
&=&{1\over2^r}\sum_{\bm x_0}\pi^N(\bm x_0)\sum_{A\subset\{1,\ldots,r\}}{1\over N^{|A|+v}}\sum_{B\subset\{r+1,\ldots,r+v\}}\sum_{i_u:u\in A\cup \{r+1,\ldots,r+v\}}\\
&&\;\;{}\cdot\prod_{w\in B^c}[1-c_{i_w}(\bm x_0)]\prod_{w\in B}c_{i_w}(\bm x_0){1\over N}\sum_l[p_{m_l(\bm x_0)}-q_{m_l(\bm x_0)}]+O(N^{-1})\\
&=&{1\over N}\sum_{\bm x_0}\pi^N(\bm x_0)\sum_l[p_{m_l(\bm x_0)}-q_{m_l(\bm x_0)}]+O(N^{-1})\\
&=&\sum_{w=0}^1\sum_{z=0}^1(\pi^N)_{-1,1}(w,z)(p_{2w+z}-q_{2w+z})+O(N^{-1})\\
&=&\sum_{w=0}^1\sum_{z=0}^1(\pi^{r/(r+s)})_{-1,1}(w,z)(p_{2w+z}-q_{2w+z})+o(1).
\end{eqnarray*}
Hence, using (\ref{p_m()}) with $\gamma:=r/(r+s)$, we have
$$
\mu_{[r,s]}^N\to(1-\gamma)\sum_{w=0}^1\sum_{z=0}^1(\pi^\gamma)_{-1,1}(w,z)(p_{2w+z}-q_{2w+z})=\mu_{(\gamma,1-\gamma)},
$$
as required.
\end{proof}

\end{document}